\definecolor{bleu_sombre}{rgb}{0,0,0.6}
\definecolor{rouge_sombre}{rgb}{0.8,0,0}
\definecolor{vert_sombre}{rgb}{0,0.6,0}
\theoremstyle{plain}
\newtheorem{theorem}{Theorem}
\newtheorem*{theorem*}{Theorem}
\newtheorem{corollary}[theorem]{Corollary}
\newtheorem{lemma}{Lemma}[section]
\newtheorem{proposition}{Proposition}
\theoremstyle{remark}
\newtheorem{remark}{Remark}
\theoremstyle{definition}
\theoremstyle{definition} \newtheorem*{key}{Key words}
\theoremstyle{definition} \newtheorem*{ams}{A.M.S. Classification}
\newcommand{\be} {\begin{equation}}
\newcommand{\ee} {\end{equation}}
\newcommand{\bea} {\begin{eqnarray}}
\newcommand{\eea} {\end{eqnarray}}
\newcommand{\Bea} {\begin{eqnarray*}}
\newcommand{\Eea} {\end{eqnarray*}}
\newcommand{\ind}{\mathbbm{1}} 
\newcommand{\ag}{\alpha}
\newcommand{\eg}{\varepsilon}
\newcommand{\FF}{\mathcal{F}}
\newcommand{\GG}{\mathcal{G}}
\newcommand{\TT}{\mathcal{T}}
\newcommand{\VV}{\mathcal{V}}
\newcommand{\ZZ}{\mathcal{Z}}
\newcommand{\bT}{\boldsymbol T}
\newcommand{\bZ}{\boldsymbol Z}
\newcommand{\bW}{\boldsymbol W}
\newcommand{\bbZ}{\boldsymbol{\overline{Z}}}
\newcommand{\N}{\mathbb{N}}
\newcommand{\Q}{\mathbb{Q}}
\newcommand{\R}{\mathbb{R}}
\newcommand{\Z}{\mathbb{Z}}
\renewcommand{\P}{{\mathbf P}}
\newcommand{\E}{{\mathbf E}}
\newcommand{\entiers}[2]{\llbracket#1,#2\rrbracket}
\renewcommand{\tilde}{\widetilde}
\def\w{\widetilde}
\newcommand{\st}{~\text{s.t.}~}
\begin{document}
\title{Queueing for an infinite bus line and aging branching process}
\author{Vincent Bansaye \& Alain Camanes}
\date{\today}
\address{}
\email{}
\bibliographystyle{abbrv}
\renewcommand{\labelenumi}{$\roman{enumi}.$}
\renewcommand{\labelitemi}{$\bullet$}
\renewcommand{\labelitemii}{$\ast$}

\maketitle

\begin{abstract}
We study a queueing system with Poisson arrivals on a bus line indexed by integers. The buses move at constant speed 
to the right and the time of service per customer getting on the bus is fixed.
The customers arriving at station $i$ wait for a bus if this latter is less than  $d_i$ stations before, where $d_i$ is non-decreasing. 
We determine the asymptotic behavior of a single bus and when two buses eventually coalesce almost surely by coupling arguments. Three regimes appear, two of which leading to a.s. coalescing of the buses.\\
The approach relies on a connection with aged structured branching processes with immigration and varying environment. We need to prove a Kesten Stigum type theorem, i.e. the a.s. convergence of the successive size of the branching process normalized by its mean. The technics developed combines a spine approach for multitype branching process in varying environment and geometric ergodicity along the spine to control the increments of the normalized process. 
\end{abstract}

\begin{key} Queuing systems, Polling, Multitype Branching processes, Immigration, Inhomogeneity, Coupling.
\end{key}
\begin{ams}60J80, 60K25, 60F05, 60F10.
\end{ams}

\tableofcontents

\section{Introduction \& Main results}

The main motivation of this paper is the study of queueing systems where customers arrive on the half line of integers (a bus line) and the traveling servers (the buses) move at a constant speed to the right. The time service for each customer (the time required to get on the bus) is assumed to be constant for simplicity. In the model considered here, a customer arriving in station $i\in \N$ is waiting at this station if the distance from this customer to the next bus is less than $d_i$. We describe now  this process, its  link with aging branching processes and the main results  of the paper.

\subsection{Queuing system for a bus line}\label{sec:introqueueing}
We consider an infinite bus line, where stations are labeled by $\N=\{0, 1,2,3,\ldots\}$. The buses all start from station $0$, the depot, and no customer  get on at this station. 
To each station $i \geq 1$, we associate an independent Poisson Point Processes $\bT^{(i)} = (T^{(i)}_k : k\geq 1)$ on $[0,\infty)$ with intensity $\alpha \in (0,1)$:
$T^{(i)}_k$ is the time arrival of  the $k$-th customer at station $i$.\\
The discipline $(d_i : i \geq 1)$  of the queuing is   a  sequence of positive integers such that
\begin{equation}\tag{A}\label{as:di}
d_1 = 1,  \qquad d_2=2,  \qquad d_{i} \leq d_{i+1}\leq d_{i}+1 \quad (i \geq 2).
\end{equation}
When a customer arrives at station $i$, if the next bus is $d_i$, or more, stations far from him, then he decides not to take the bus and he  leaves the queue. Thus, the customers who are indeed waiting at the station $i$ are those
arrived at this station after a bus has left the station $i-d_i$ and before it has left station $i$. Notice that, when $d_i=1$, customers wait at station $i$ only if the bus has already left the previous station and we have assumed $d_i>1$ for $i>1$ to simplify the links with aging branching processes and avoid the degenerate case when only switch over times matter. \\
The discipline $d_i=i$ is a first relevant example. Then, the  customers wait  the first bus at the station $i$ as soon as the first bus has left the depot  (time $0$) and has not already left station $i$.     This polling system for one server with infinitely many stations is a sequential  clearing process
and  has been studied in \cite{K1} when Poisson arrivals happen on the half line $[0,\infty)$ with rate $\lambda$. Then, the position $P_t$ of the bus (server) at time $t$ is equivalent to $\log (t)/\lambda$ when $t\rightarrow \infty$. When the bus can move to the left, analogous transient behavior occur and we refer to \cite{K2, F} for the study of the greedy walker respectively on $\Z$ and $\R$.\\
The model when $d_i = d \wedge i$ is an other interesting discipline. Then, the customers wait at  station $i$  if the distance to the next bus is strictly less than $d$ stations, soon as $i\geq d$.  Many works have been dedicated to models with a finite number of queues and servers moving from one queue to another. In particular, a vast literature deal with customers arriving on a circle while the servers visit the queues cyclically. We refer to \cite{Ta, Borst, Boxma, ReviewPolling} and reference therein for an overview about polling models, see also the discussion below. The stability of the queueing then depends on the number of queues and time service. In this case, ballistic and logarithm  regimes appear, which   are linked to the subcritical and supercritical regimes of the associated branching structure. This happens also in our model. \\
The class of discipline (\ref{as:di}) that we consider allows to describe intermediate regimes between the two previous examples.
 Then, the farther the station is from the depot, the more customers may wait. Since sequence $(d_i)$ is non-decreasing from Assumption~\eqref{as:di}, the case when $d_i$ is bounded amounts to consider, for $i$ large enough, $d_i$ constant and the main difficulties arise when $d_i$ goes to infinity. Several extensions of discipline (\ref{as:di}) could be considered and we make a few more precise comments along the paper.
Another motivation for considering the discipline (\ref{as:di})  is the  study of the motion of a bus behind another bus,  when the distance between them increases like the sequence $(d_i)_i$, see the last part (Section~\ref{sec:buses_two}) of the paper. \\
The position of the  bus $k\in \{1, 2 \ldots \}$ at time $t$ is denoted by $P_t^{(k)}$  and the time when it leaves the station $i$ by $H_i^{(k)}$. Between two stations, the buses move at constant speed $\tau$
and  for any $i\geq 0$:
\Bea
P_{t}^{(k)}&=&i + (t - H_i^{(k)}) / \tau \quad \text{for } \ t \in [H_i^{(k)},H_i^{(k)}+ \tau];\\
P_t^{(k)}&=&i+1 \quad \text{for } \ t \in [H_i^{(k)}+\tau, H_{i+1}^{(k)}].
\Eea
For simplicity, we assume that the time service  for each customer (to get on a bus)  is constant and equal to one, so  the time spent in a  station  is  the number of customers who have got on. \\
In this paper, we consider only two buses and  $k \in \{1, 2\}$. We assume  that bus $1$ starts from the depot at time $0$ and bus  $2$ starts from the depot at time $\mu > 0$: 
$P_0^{(1)} = 0,$   $P_t^{(2)} = 0 \text{ for any  } t  \in [0,\mu]$.
 The successive positions of the buses can be  defined recursively. To do so, we introduce the label $F_i^{(k)}$ of the first potential  customer waiting for bus $k$ at the station $i$: this customer arrived after the departure of the bus $k$ from station $i - d_i$ and after the passage of the previous bus  (if there is one, i.e. for the second bus). We then consider the set of customers 
 $\{F_i^{(k)}+j : j\in \mathcal I_i^{(k)}\}$
 arriving after this first customer (including himself) and before the departure of the bus from the station. This set may be empty if the first potential customer arrived too late. It provides the label of the last (potential) customer $M_i^{(k)}$  getting on the bus and  the number of customers getting on the bus, which yields the time spent by the bus in station $i$. \\
 More precisely, the times $(H_i^{(1)} : i \geq 0)$ are defined recursively by
\begin{equation}\label{eq:ind}
\begin{split}
H_0^{(1)} &= 0\ a.s.\\
F_i^{(1)} &= \min\left( j \geq 1 : T_j^{(i)} >H_{i-d_i}^{(1)}\right) \\
\mathcal I_i^{(1)} &= \{k \in \N : \forall j \leq k,\, T_{F_i^{(1)}+j}^{(i)} \leq H_{i-1}^{(1)} + \tau + j\} \\
M_i^{(1)} &= F_i^{(1)} - 1 + \#\ \mathcal I_i^{(1)} \\
H_i^{(1)} &= H_{i-1}^{(1)} + \tau + (M_i^{(1)} - F_i^{(1)} + 1),
\end{split}
\end{equation}
where $\#\ \mathcal I_i^{(1)}$ denotes the number of elements of $I_i^{(1)}$. First note that $0\leq i-d_i<i$ under $(\ref{as:di})$. Morerover, since $\alpha > 0$, then $(T_j^{(i)})_j$ goes to 
infinity a.s. and $F_i^{(1)}$ is a.s. finite. Finally, since $\alpha < 1$, we will explain in Section~\ref{sec:bus_aging} and Appendix~\ref{sec:queues}, using a standard link with  Galton-Watson process, that $\mathcal I_i^{(1)}$ is  bounded and $M_i^{(1)}$ is also finite a.s.
\begin{figure}[ht!]
\includegraphics[width=0.7\textwidth]{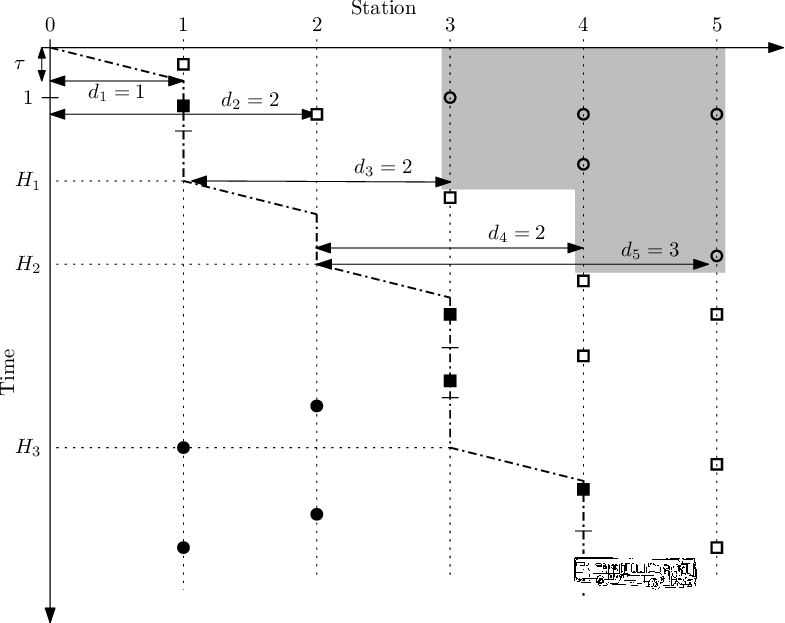}
\caption{\small Successive positions of a single bus following discipline $(d_i)$. Customers who are not getting on the bus, either arrive when the bus was too far ($\circ$), or when the bus 
has already left ($\bullet$). Customers getting on the bus arrived at the station either before the bus arrival ($\square$), or when another customer of this station was getting on ($\blacksquare$).}
\end{figure}

Let us define similarly the motion of the second bus but now customers may have been taken by the first bus. Using  the label  $M_i^{(1)}$ of the last customer getting on  the bus $1$, we set for $i\geq 1$,
\begin{equation}\label{eq:ind2}
\begin{split}
H_0^{(2)} &= \mu\ a.s.\\
F_i^{(2)} &= \min\left( j > M_i^{(1)} : T_j^{(i)}> H_{i-d_i}^{(2)}\right ) \\
\mathcal I_i^{(2)} &= \{k \in \N : \forall j \leq k,\, T_{F_i^{(2)}+j}^{(i)} \leq H_{i-1}^{(2)} + \tau + j\} \\
M_i^{(2)} &= F_i^{(2)} - 1 + \#\ \mathcal I_i^{(2)} \\
H_i^{(2)} &= H_{i-1}^{(2)} + \tau + (M_i^{(2)}-F_i^{(2)}+1).
\end{split}
\end{equation}

\subsection{Main results}
The position of the bus $1$ (a bus alone on the line) is described using  the following theorem, whose proof is deferred to Section~\ref{sec:buses_one}. When $(d_i)_i$ is bounded (and non decreasing from Assumption~\eqref{as:di}), it is stationary and for simplicity we focus in that case on $d_i= d \wedge i$. For any integer $a\geq 0$, let $\varrho(a) \in (0,\infty]$ be the  the largest root of $$P_a = X^{a+1} - m \sum_{k = 0}^a X^k, \quad \text{where }  m=\frac{\alpha}{1-\alpha}.$$ 
Thus, $\varrho(a)$   is  the largest eingenvalue of the Leslie matrix of size $a+1$, see   Appendix~\ref{sec:mean_asymptotics}.  In the next statement, $d\geq 2$ is a constant.

\begin{theorem*}
\noindent $(i)$ If $d_i = d \wedge i$  and $\varrho(d-2)<1$, then there exists $c>\tau$ such that
\[
H_i^{(1)} \sim_{i\rightarrow \infty} ci \quad a.s.
\]
Moreover, $(H_i^{(1)}-ic)/\sqrt{i}$ converges weakly to a Gaussian r.v. with positive variance.

\noindent $(ii)$ If $d_i = d \wedge i$ with   $\varrho(d-2)>1$ or $d_i\rightarrow \infty$, then
\[
H_i^{(1)} \sim_{i\rightarrow \infty} m_i W \quad a.s., 
\]
where $W$ is a finite positive random variable  and  $m_i$ satisfies the following recursion
\begin{equation}\label{eq:mn}
m_0 = 1; \qquad   m_{n} = m \sum_{i=0}^{d_{n+1}-2} m_{n-1-i} \quad (n \geq 1).
\end{equation}
\end{theorem*}
This dichotomy $(i-ii)$ between the ballistic and logarithm speed  is inherited from the subcritical and supercritical behavior of the underlying branching structure and is classical for polling models, see in particular \cite{MeyResing,  Ta} and discussion below. Here the main difficulty arises when  $d_i$ goes to infinity and in particular when it does not go to infinity fast enough to behave as the greedy walker.  The critical case and the heavy traffic limit are not investigated in this paper, see in particular \cite{VdM} for cycling polling models. \\
In the case $(ii)$, if  $d_i= d\wedge i$ then $m_i$ grows geometrically with rate $\varrho(d-2)\in (1,  m + 1)$; while if  $d_i\rightarrow \infty$, then $m_{i+1}/m_i$ goes to $m + 1$. 
We refer to  Lemma~\ref{lem:mean_asymptotic} for finer  results in that case. In particular,  we give an analytical criterion on $d_i$ to ensure that   $m_i \sim c(m+1)^i$ for some $c>0$, which corresponds
to the asymptotic behavior of the clearing sequential process
and  greedy walk \cite{K1,K2}($d_i=i$). \\

The station where the two buses coalesce is defined by 
\[
N = \min( i \geq 0 : H_i^{(1)}\geq H_i^{(2)} )=\min(i \geq 1 ;  \exists t>0 : P_t^{(1)}=P_t^{(2)}=i\},
\]
which is infinite when the buses never coalesce. 
Obviously, $\P(N<\infty)>0$ and we say that the two buses coalesce a.s. iff $\P(N<\infty)=1$.
In Section~\ref{sec:buses_two}, we prove the following result using coupling arguments and the discipline of the buses:
\begin{theorem*}
$(i)$ If $d_i = d \wedge i$ and $\varrho(d-2)<1$, then the two buses coalesce a.s. \\
$(ii)$ If $d_i = d \wedge i$ and $\varrho(d-2) > 1$, then the two buses do not coalesce a.s. \\
$(iii)$ If $d_i\rightarrow \infty$ as $i \rightarrow \infty$, then the two buses coalesce a.s.
\end{theorem*}

\subsection{Aging branching processes}\label{sec:introaging}

The behaviors of the buses rely on the study of the branching structure associated to the motion of a single bus. Indeed, during the time a customer gets on the bus, new customers 
may arrive both at the same station and at the following stations\ldots  
Customers waiting at  station $i$  take  a bus and then provoke some queuing (only) in a set of stations following their position, whose size depends on $i$ through the discipline $(d_k)_k$. These 
customers queuing in these successive stations $i+k$ can be seen as offsprings
and the integer $k$ as an age for the original customer. That leads us to study an  aging branching structure where  
individuals (customers) give birth during a life period whose length depends on their birth time (their station of arrival).  We describe in this section this branching process, which may be interesting for itself. The  link with the bus line is specified in Proposition~\ref{bus_gw}, Section \ref{sec:bus_aging}.

\medskip

We consider a branching process where each individual reproduces independently with reproduction law given by an integer valued random variable $R$ and may die depending on his age. We write  $a_n$ the maximal age one  individual may have in generation $n$ and we assume that
\begin{equation}\tag{B}\label{as:age}
\forall n \in \N,\quad  0\leq a_n \leq a_{n+1} \leq a_n + 1.
\end{equation}
This provides an aging structure to the population.
This is also the counterpart of Assumption~\eqref{as:di} above for the discipline of the queueing system. More specifically, both sequences are linked
by $a_n = d_{n+1} - 2$ for $n\geq 2$ (see Section~\ref{sec:bus_aging} for details) as soon as the right hand side is non-negative. \\
For each $n,\, a \in \N,$ we denote by $\bZ_n(a)$ the number of individuals in generation $n$ whose age is equal to $a$ and $\bZ_n$ the total size of the population in generation $n$. The aging branching process with immigration $(\bZ_n : n \in \mathbb N)$ is initialized with a bounded random vector $(\bZ_0(a): a\in \N)$ where $Z_0(a) = 0$ if $a > a_0$ and  defined recursively as follows:
\begin{subequations}
\label{eq:reproduction}
\begin{align}
\bZ_n(a) &= \bZ_{n-1}(a-1) \ind_{ a\leq a_{n}} \quad \text{for } a\geq 1; \\
\bZ_n(0) &= \sum_{j = 1}^{\bZ_{n-1}} R_{j,n} + I_n;\\
\bZ_n &:= \sum_{a=0}^{a_n} \bZ_n(a); 
\end{align}
\end{subequations}
where $(R_{j,n},\, j,n \in \N)$ are i.i.d.  integer valued random variables distributed as $R$ and $(I_n,\, n \in \N)$ are independent  random variables independent of $(R_{j,n},\, j,n \in \N)$.

In Section~\ref{sec:immigration}, we  prove a Kesten Stigum theorem and specify the distribution of traits among the population. We  write $m=\E(R)$  and assume that
\begin{equation}\tag{C}\label{as:immigration}
\E(R^2) < \infty,  \qquad
\limsup_{n \rightarrow \infty} \frac{\log(\E(I_n))}{n}<1+m, \qquad \ \P(I_n \geq x)\geq \P(I\geq x),
\end{equation}
for any $x\geq 0$, where $I$ is a non-negative random variable and $\P(I>0)>0$.  These assumptions allow to use $L^2$ computations 
and guarantees that the process survives but the immigration term is not exploding fast enough to change  its growth. 
\begin{theorem*} Assume that $a_n \rightarrow \infty$.\\
$(i)$ The process $(\bZ_{n}/m_n : n \in \N)$ converges a.s. to a finite positive random variable.\\
$(ii)$ For each  $a\geq 0$, the process $(\bZ_n(a) / \bZ_n : n \in \N)$ converges a.s. to $m/(m+1)^{a+1}$.
\end{theorem*}
The proof is first achieved without immigration and the quantity $m_n$ is the mean number of individuals living at generation $n$ in that case, see Section~\ref{sec:mean_estimates}. Of course, $m_n$ is determined by 
 Equation~\eqref{eq:mn} replacing $d_{n+1} - 2$ by $a_n$. Let us stress that when $a_n$ goes to infinity, the asymptotical distribution of age in the population  coincide with the immortal case $a_n=\infty$ but the growth  of the population $(m_n)_n$ may differ, see  Lemma~\ref{lem:mean_asymptotic}  in Appendix for details.

\subsection{Connection between queuing and branching process and discussion}
The connection between branching processes and queueing systems is known from a long time and can be found for example in \cite{Feller} (chapter XIV). Basically the offspring of an individual is the number of customers arriving during the time when this individual is served.\\
Multitype branching processes have  played a key role in the analysis of polling models and immigration allows to capture polling models with non-zero switch-over times, see e.g. \cite{MeyResing}.
 Using multitype branching process with final product allows to  describe the busy period time and could provide an efficient way to extend this work to random time service, see   \cite{Dyak}  with zero switch-over time and \cite{Vat} with non-zero switch-over time, both in random environment. 
The stable case corresponds to subcritical branching process, and the heavy traffic limit (when the load tends to $1$) has also been investigated using near critical
branching processes \cite{VdM}. This would provide another stimulating regime for the  model considered here.

A vast literature is dedicated to multitype branching processes (see e.g. \cite{Mode, AN}) with a finite number of types and their applications to population dynamics (see e.g. \cite{HJV,Kimmel}) and queueing systems (see e.g \cite{LectureVatutin}). In particular, they give a convenient way to model population without interactions and take into account a bounded age structure influencing the reproduction law of the individuals. The associated mean matrix is then called Leslie matrix and Perron Frobenius theory enables to describe the asymptotic behavior of the process. In particular, the convergence of the renormalized size of a branching process can be found in \cite{AN,KLPP}.\\
Much less is known about infinite number of types and varying environment since the spectral approach and martingale techniques cannot be extended easily. The pioneering works of e.g. Moy \cite{Moy} and Kesten \cite{Kesten} provide some theoretical extensions of limit theorems and extinction criteria for a denumerable number of types, while multitype branching processes with a finite number of types have been well studied for stationary ergodic environment (see e.g. \cite{AK1, AK2}). \\
 In varying environment and finite dimension, Coale Lopez  theorem   \cite{Seneta} about product of matrices can be used to describe the mean behavior of the process and prove asymptotic results on the branching process via weak ergodicity, see \cite{Jones} and  \cite{Cohn}. Here we have to deal both with varying environment and infinite dimension. Necessary and sufficient conditions for a.s. and $L^p$ convergence of martingales associated to time and space harmonic vectors have been obtained in \cite{BCN} in the case of countable set of types. But they rely on the ergodic behavior of the first moment semigroup and their application in  our framework  seems delicate. Nevertheless, it could help for example to relax the moment assumptions made here. \\
We develop here a slightly different approach and control the $L^2$ moments of the successive increments of the renormalized process $\bZ_n/\E(\bZ_n)$ without immigration. This technique is inspired by the spine approach initiated by \cite{CR, LPP, KLPP} (and the so-called many-to-one formula and formula for forks). 
We provide an expression of the transition matrix of the Markov process along the spine following \cite{Ban}. Thanks to a Doeblin's condition, we get the geometric ergodicity of this  Markov chain which enables us to control finely the first moment semigroups and then the increments of the renormalized process. We believe this method is rather efficient and could be generalizable in several ways. \\
Let us finally mention that taking into account time non-homogeneity  and immigration is more generally motivated by self excited process (like Hawkes process) where the immigration is the excitation and the branching structure is the cascade effect provoked by this excitation. Here letting $a_n$ goes to infinity makes the range of memory of the process be very large.

\subsection{Organization of the paper}

The first part (Section~\ref{sec:aging}) of this work is dedicated to the study of the aging structured branching population $\bZ_n$. Branching process without immigration is first 
studied and  Theorem~\ref{thm:main} states the $L^2$ convergence of the normalized process and the asymptotic distribution of the ages among the population. Then, to derive the behavior in
the presence of immigration (see Theorem~\ref{thm:immigration}), we sum the descendants of each immigrant. We  conclude this section with the behavior of the cumulative number of newborns (see Corollary~\ref{thm:newborn}), which is the key quantity used for the study of the motion of the buses. All along the paper, bold letters will be dedicated to processes with immigration, e.g. $Z_n$ is for the size of the population in generation $n$ without immigration and $\bZ_n$ is for the one with immigration. \\
In Section~\ref{sec:bus_aging}, we  specify the link between bus line and the aging branching process. \\
In Section~\ref{sec:queueing}, we first study the progression of one single bus  (Theorem~\ref{thm:1bus}). We then apply these results to determine when two successive buses coalesce a.s. (Theorem ~\ref{thm:coalescence}) by coupling arguments.

\section{Aging branching process with maximal age going to infinity }\label{sec:aging}
 In this section, we assume $\eqref{as:age}$ and  focus  on maximal ages going to infinity:
\[
\lim_{n\rightarrow \infty} a_{n} =\infty.
\]
We first note that the means of $Z_n$  and  $\bZ_n$  go to infinity and the process is supercritical.
Indeed, for any $a_0 \geq 0$ and  $n$ large enough, $Z_n(.)$ is stochastically larger than a multitype branching process whose mean matrix
is the squared Leslie's matrix of size $a_0 + 1$; see Appendix~\ref{sec:mean_asymptotics} and below for details and  finer results.

\subsection{Aging branching process without immigration}\label{sec:sans_immigration}
First define the aging branching process without immigration. The process $(Z_n(.) : n \in\mathbb N)$ is defined by $Z_0(0) = 1$, $Z_0(a)=0$ for $a\geq 1$ and for $n \geq 1$,
\begin{subequations} \label{eq:reproduction_sans_im}
\begin{align}
Z_n(a) &= Z_{n-1}(a-1) \ind_{a \leq a_{n}} \quad \text{for } a\geq 1;\\
Z_n(0) &= \sum_{j = 1}^{Z_{n-1}} R_{j,n}; \label{eq:nv_nes_sans_im} \\ 
Z_n &:= \sum_{a=0}^{a_n} Z_n(a); 
\end{align}
\end{subequations}
where $(R_{j,n},\, j,n \in \N)$ are i.i.d. random variables distributed as $R$. The non-extinction event is defined by
\[
\text{Nonext} := \{\forall n \geq 0 : Z_n > 0\}.
\]

\smallskip

We give now the asymptotic behavior of $Z_n$ under the $L^2$ condition
$$\sigma^2 = \E[(R-m)^2]<\infty.$$
It is used in next Section~\ref{sec:immigration} to derive the counterpart  when the immigration is non zero.

\begin{theorem}\label{thm:main}
\noindent $(i)$ The event $\emph{Nonext}$ has positive probability.

\noindent $(ii)$ The process $(Z_n/\E(Z_n) : n \in \N)$ converges a.s. and in $L^2$ to a non-negative r.v. $W$, which is positive a.s. on \emph{Nonext}.

\noindent $(iii)$ For each $a\in \N$, the process $(Z_n(a) / Z_n : n \in \N)$ converges a.s. to $m / (m+1)^{a+1}$ on \emph{Nonext}.
\end{theorem}

We prove first  $(i)$. The two following subsections~\ref{sec:main_ii} and \ref{sec:main_iii} are respectively devoted to the proofs of $(ii)$ and $(iii)$.

\begin{proof}[Proof of Theorem~\ref{thm:main} $(i)$]
 Since $(a_n)$ goes to infinity, for $n_0$ large enough $\varrho(a_{n_0}) > 1$, where $\varrho(a)$ has been defined in the Introduction
 and is the greatest eigenvalue of the Leslie Matrix sized $a+1$. Moreover, with positive probability, $Z_{n_0}(0) \neq 0$. Then, $(Z_n)_{n\geq n_0}$ is 
 lower bounded by a multitype branching process starting from one  newborn, with fixed maximal age $a_{n_0}$ and reproduction law $R$. 
Recalling that the mean matrix of this latter is a primitive matrix with maximal eigenvalue greater than $1$  and that the second moment of $R$ is finite ensures that
with positive  probability, it  grows to infinity, see the supercritical regime  in \cite{AN},  Theorem 1, Chapter V - Section 6. This proves $(i)$.
\end{proof}

We prove in Section~\ref{sec:mean_estimates} that $m_{n+1}/m_n\rightarrow 1$, so Theorem~\ref{thm:main} ensures that
the ratio of consecutive population sizes $Z_{n+1}/Z_n$ goes to $m+1$ on Nonext. However, the process may grow slower than $(m+1)^n$. In fact, two asymptotic regimes appear, depending whether $(a_n)$ goes fast enough to infinity or not.

\begin{corollary}\label{cor:regimes}
The process $M_n = (m+1)^{-n} Z_n$
is a supermartingale which converges a.s. to a finite non-negative random variable $M$. Furthermore,
\begin{align*}
(i)& \text{ if } \liminf_{n\rightarrow \infty} a_n/ \log n >1/ \log(m+1) \text{, then } \{M > 0\} = \emph{Nonext}. \\
(ii)& \text{ if } \limsup_{n\rightarrow \infty} a_n / \log n < 1/ \log(m+1) \text{, then } M= 0 \text{ a.s.} \qquad \qquad \qquad \qquad \qquad \qquad 
\end{align*}
\end{corollary}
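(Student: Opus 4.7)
The plan is to reduce the corollary to the study of the deterministic sequence $u_n := m_n/(m+1)^n$. First, $M_n = (m+1)^{-n}Z_n$ is a non-negative supermartingale: from $\E[Z_{n+1}\mid\FF_n] = Z_n + m Z_n - Z_n(a_n)\ind_{a_{n+1}=a_n} \leq (m+1)Z_n$ the supermartingale property follows, so $M_n\to M$ a.s. Writing $M_n = u_n\cdot (Z_n/m_n)$ and applying Theorem~\ref{thm:main}, which provides $Z_n/m_n\to W$ with $W>0$ a.s.\ on \emph{Nonext}, one gets $M = u_\infty W$, where $u_\infty := \lim_n u_n$ exists in $[0,1]$ because $(u_n)$ is non-increasing (from $m_{n+1} = (m+1)m_n - d_n$ with $d_n\geq 0$). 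Since $\{M>0\}\subset\emph{Nonext}$ is automatic, the game reduces to showing $u_\infty>0$ in case (i) and $u_\infty = 0$ in case (ii).

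The key tool is the identity, valid for $n > a_n$,
\[
u_n = \frac{m}{m+1}\sum_{k=0}^{a_n}(m+1)^{-k}\,u_{n-k-1}, \qquad \frac{m}{m+1}\sum_{k=0}^{a_n}(m+1)^{-k} = 1-(m+1)^{-a_n-1},
\]
obtained by decomposing $m_n = \sum_{a=0}^{a_n} B_{n-a}$ by age, where $B_j = m\,m_{j-1}$ is the mean number of births at time $j$. Since $(u_n)$ is non-increasing, bounding $u_{n-k-1}$ below by $u_{n-1}$ and above by $u_{n-a_n-1}$ yields the two-sided control
\[
u_{n-1}\bigl(1-(m+1)^{-a_n-1}\bigr) \leq u_n \leq u_{n-a_n-1}\bigl(1-(m+1)^{-a_n-1}\bigr).
\]

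For (i), I would telescope the lower bound to $u_n\geq u_{n_0}\prod_{k>n_0}(1-(m+1)^{-a_k-1})$; this infinite product is positive iff $\sum_k(m+1)^{-a_k}<\infty$. The hypothesis $\liminf a_i/\log i > 1/\log(m+1)$ yields $(m+1)^{-a_i}\leq i^{-\alpha}$ for some $\alpha > 1$ and $i$ large, so the series converges and $u_\infty > 0$; combined with $W>0$ on \emph{Nonext}, this gives $\{M>0\}=\emph{Nonext}$.

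For (ii), I would iterate the upper bound along the sequence $n_0 = n$, $n_{j+1} = n_j - a_{n_j} - 1$, obtaining
\[
u_n \leq \exp\!\Bigl(-\sum_{j=0}^{k-1}(m+1)^{-a_{n_j}-1}\Bigr).
\]
Under $\limsup a_i/\log i < c < 1/\log(m+1)$, one can perform about $n/(c\log n)$ iterations while keeping $n_j \geq n/2$, each contributing at least $(m+1)^{-1}n^{-c\log(m+1)}$ to the sum. Since $c\log(m+1) < 1$, the exponent grows like $n^{1-c\log(m+1)}/\log n\to\infty$, so $u_n\to 0$ and $M=0$ a.s. I expect this last step — balancing the number of iterations against the lower bound on $n_j$ so that enough terms accumulate — to be the most delicate part of the proof.
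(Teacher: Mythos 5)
Your proof is correct, and it takes a genuinely different and more elementary route than the paper. The paper reduces Corollary~\ref{cor:regimes} to the appendix Lemma~\ref{lem:mean_asymptotic}, whose proof compares $m_n(0,0)$ to products of Perron--Frobenius eigenvalues $\rho(a_j)$ of Leslie matrices, and relies on the asymptotic expansion $\rho(a) = (m+1) - m(m+1)^{-a-1} + O(a(m+1)^{-2a})$ from Lemma~\ref{lem:technical} together with Lemma~\ref{lem:prod_leslie} on infinite products of $\rho(a_i)$. You bypass this machinery entirely: starting from the same age decomposition $m_n = m\sum_{a=0}^{a_n} m_{n-a-1}$ (valid for $n > a_n$ because $j - a_j$ is non-decreasing), you obtain a closed one-step recursion for the normalized mean $u_n = m_n/(m+1)^n$, use monotonicity of $(u_n)$ to get the two-sided bound $u_{n-1}\bigl(1-(m+1)^{-a_n-1}\bigr) \leq u_n \leq u_{n-a_n-1}\bigl(1-(m+1)^{-a_n-1}\bigr)$, and conclude by telescoping. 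The reduction of the corollary to the positivity/nullity of $u_\infty$ via $M = u_\infty W$ and Theorem~\ref{thm:main}(i) is exactly how the paper proceeds, as is the supermartingale computation; what you replace is the spectral input. For case (ii), the iteration $n_{j+1} = n_j - a_{n_j} - 1$ with $n_j \geq n/2$ does work (about $n/(2c\log n)$ steps, each contributing at least $(m+1)^{-1}n^{-c\log(m+1)}$ to the exponent, which then grows like $n^{1-c\log(m+1)}/\log n$), though the paper's version --- showing $m_n(0,0) \leq \rho(a_n)^n$ by induction and then expanding $\rho(a_n)^n/(m+1)^n$ --- arrives at the same conclusion with arguably less bookkeeping. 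On balance, your proof trades the Leslie-matrix/Perron--Frobenius apparatus (which the paper reuses elsewhere, e.g.\ Lemma~\ref{lem:mean_behavior}(ii) and Theorems~\ref{thm:1bus}--\ref{thm:coalescence} for constant $a_n$) for a self-contained, purely scalar argument; both establish exactly the qualitative dichotomy the corollary needs.
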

\noindent Notice that, when $a_i = i$, each individual is immortal  and we obtain a Galton-Watson process with mean $m+1$, which grows geometrically with rate $m+1$ as $n\rightarrow \infty$. The result above shows at which speed $(a_n)$ has to go infinity so that such a growth still holds. The proof of this statement is postponed to Section~\ref{sec:cor_regimes}.

\subsubsection{Genealogical description}

To achieve the proof of Theorem~\ref{thm:main}, we use a genealogical description of the aging branching process. 
Following notations of Ulam-Harris-Neveu, we introduce 
$$\mathcal I = \{\varnothing \} \cup \bigcup_{n\geq 1} \{0,1,2,\ldots\}^n$$
endowed with the natural order $\preccurlyeq$: $u\preccurlyeq v$ if there exists $w\in \mathcal I$ such that
$v=uw$. Let $|u|=n$ denotes the length of $u$ when $u=u_1\ldots u_n\in \mathcal I$ and $|\emptyset| = 0$. \\
We define now $(\mathcal T, \ZZ)$ the genealogical tree labeled by the age of the individuals 
where $\TT$ is a random subset of $\mathcal I$ and $\ZZ=(\ZZ(u) : u\in \TT) $ is a collection of integers giving the ages. This can be achieved
recursively as follows: 
\begin{itemize}
\item $\varnothing \in \TT$ and $\ZZ(\varnothing) = 0$.
\item $u0 \in \TT$ iff $u\in \TT$ and $\ZZ(u)+1\leq a_{\vert u\vert+1}$. Then, $\ZZ(u0)=\ZZ(u)+1$.
\item For any $j\geq 1$ and $u\in \TT$, $uj\in \TT \quad \text{iff} \ j \leq R_{ Cous_{\ell}(u), \vert u \vert}$ where
\[
Cous_{\ell}(u)= \sharp \{ v \in \TT : v=v_1\ldots v_{\vert u\vert }, \exists j \leq \vert u\vert, \forall k \leq j,\, v_k \leq u_k\}.
\]
Then $\ZZ(uj)=0$. 
\end{itemize}

\begin{figure}[ht!]
\includegraphics[width=0.9\textwidth]{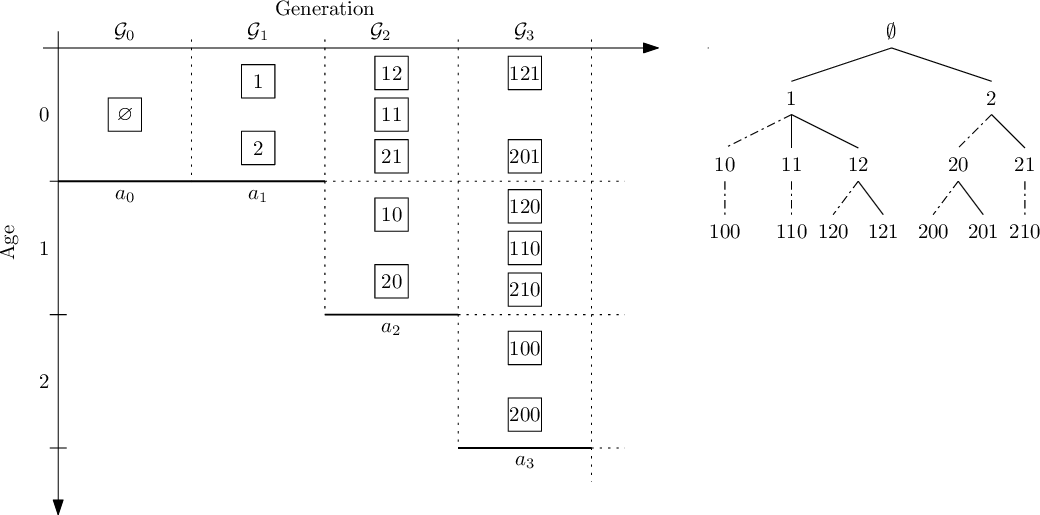}
\caption{Example of a population evolution with labeled individuals, starting with a single individual and with maximal age $a_0 = a_1 = 0,\, a_2 = 1$ and $a_3 = 2$. In the tree description, dash-dotted lines are used to denote surviving individuals.}
\end{figure}
\noindent Thus, the offspring of an individual $u$
 are labeled by $ui$ with $1\leq i \leq R_{ Cous_{\ell}(u), \vert u \vert}$, while $u$ becomes
$u0$ in the next generation if it survives.
The set of individuals in generation $n$ is denoted $\GG_n$ and the set of the offspring of $u$ in the next generation (including himself if it survives) is denoted $X(u)$. Finally, in each generation $n$, the part of the population which survives in the next generation is denoted by $\GG_n^+$, while the part of the population which dies is denoted $\GG_n^-$. More precisely,
\begin{align*}
\GG_n &:= \{u \in \TT \st |u| = n\}, \\
X(u)&:=\{ v \in \TT : \vert v\vert =\vert u\vert +1, u \preccurlyeq v\}, \\
\GG_n^+ &:= \{u \in \GG_n \st \ZZ(u) + 1 \leq a_{n+1}\}, \\
\GG_n^- &:= \{u \in \GG_n \st \ZZ(u) + 1 > a_{n+1}\}.
\end{align*}
Then, the population  $(Z_{n}(a) : a\geq 0)$ in generation $n$ is described by the measure:
$$\ZZ_n = \sum_{u\in \GG_n} \delta_{\ZZ(u)}.$$
Indeed, $Z_n(a)=\ZZ_n(\{a\})=\#\{ u \in \GG_n : \ZZ(u)=a\}$ is the size 
of the population with age $a$ in generation $n$ and
 $Z_n=\ZZ_n(\N)=\# \GG_n$. Finally, we write $\mathcal F_n$ the $\sigma$ algebra generated by $((u,X(u)) :  u \in \TT, |u| \leq n)$.

\subsubsection{Moment estimates}\label{sec:mean_estimates}

Before  the proofs, we start with some simple estimates on $\E[Z_n]$. To evaluate the mean behavior of the process, we consider the mean number of individuals at generation $n$ which are descendants of a single individual aged $a$ at generation $i$. Thus, let us note for every $0 \leq i \leq n$ and $0 \leq a \leq a_i$,
\begin{align}\label{def:mina}
m_{i,n}(a) &= \E[Z_n | \ZZ_i = \delta_a],
\end{align}
For sake of simplicity, when the initial individual is a newborn, we  write
\begin{align}\label{def:min}
m_{i,n} = m_{i,n}(0), \qquad m_n=m_{0,n}=m_{0,n}(0) = \E[Z_n  | \ZZ_0 = \delta_0].
\end{align}
Finally, we consider the mean number of individuals aged $b$ in generation $n$, which are descendants from a single individual with age $a$ in generation $i$. For $0 \leq i \leq n$, $0 \leq a \leq a_i$, $0 \leq b \leq a_n$, let
$$m_{i,n}(a,b)= \E[Z_n(b) | \ZZ_i = \delta_a].$$
 
We provide here some simple but useful estimates on these mean quantities. The proofs rely on  the semi group property 
of the first moment operators $(m_{i,n}(.,.))_{i\leq n}$, which is inherited from the  Markov and branching properties of the process $(Z_n)_n$.
\begin{lemma}\label{lem:mean_branching}
\noindent $(i)$ For every $n > 0$, $m_n(0,0) = m \ m_{n-1}$.

\noindent $(ii)$ For all $0 \leq i < n$, $i < k < n$ and $a \leq a_i$,
\be
\label{eq:mean_rec}
m_{i,n}(a) = \sum_{0 \leq \ell \leq a_k} m_{i,k}(a, \ell) m_{k,n}(\ell). 
\ee
\end{lemma}

\begin{proof}
$(i)$ Each individual in generation $n-1$ reproduces independently with the same mean number of offspring $m$ and gives birth to the newborns of generation $n$.

\noindent $(ii)$ Conditioning by the distribution of traits in generation $k$, we get the equality using the branching and Markov property of $(Z_n(.) : n \in \N)$.
\end{proof}

\smallskip

Let us describe the behavior of the mean  growth of the number of individuals.
\begin{lemma}\label{lem:mean_behavior}
\noindent $(i)$ For every $0 \leq k \leq n$, $m_{n-k}(0,0)\, m_{n-k,n} \leq m_n$.

\noindent $(ii)$ $m_{n+1}/m_n \ \rightarrow \ m + 1$ as $n \rightarrow \infty$.\\
More generally, for any $ k\geq 0$, $m_{n-k} / m_n \  \rightarrow \ (m+1)^{-k}$ as $n \rightarrow \infty$.

\noindent $(iii)$ For all $0\leq i\leq n-1$ and $0 \leq a\leq a_i$,
$$m\ m_{i+1,n}\leq m_{i,n}(a) \leq m_{i,n} \leq (m+1)m_{i+1,n}.$$

\noindent $(iv)$ For each $\varrho \in (1, m + 1)$, there exists a positive constant $\ag$ such that for $n$ large enough, $m_n \geq \ag \varrho^n$.
\end{lemma}

\begin{proof}
$(i)$ The first inequality is obtained from \eqref{eq:mean_rec} by replacing $k$ by $n-k$ and considering $i=0$, $a=0$
and 
focusing on
newborn of generation $n-k$ ($\ell=0)$. 

\smallskip

$(ii)$ We first observe that from generation $n$ to generation $n+1$, only individuals aged $a_n$ may die. Furthermore, each individual gives birth, on average, to $m$ children. Thus,
\[
(m+1) m_n - m_{n}(0,a_n)\leq m_{n+1} \leq (m+1) m_n.
\]
This yields the upper bound. For the lower bound, note that individuals aged $a_n$ in generation $n$ were newborn at generation $n-a_n$, so $m_n(0,a_n)=m_{n-a_n}(0,0)$. Thus, 
using $(i)$ with $k = a_n$, 
\[
\frac{m_n(0,a_n)}{m_n}=\frac{m_{n-a_n}(0,0)}{m_n} \leq \frac{1}{m_{n-a_n, n}}.
\]
Moreover, $m_{n-a_n, n} = (m+1)^{a_n}$ since starting from an individual aged $0$ in generation $n-a_n$, nobody goes beyond the maximal age (and thus die) in the time interval $[n-a_n,n]$. So, each individual is replaced by $m+1$ individuals in average. Adding that $a_n$ goes to infinity, $m_n(0,a_n) / m_n$ goes to $0$ and this yields the lower bound for $(ii)$. By iteration, we obtain the second part of $(ii)$.

\smallskip

$(iii)$ The bound $m_{i,n}(a) \leq m_{i,n}$ comes simply from the fact that an individual with age $0$ lives longer that an individual with age $a>0$. Then \eqref{eq:mean_rec} yields the two last inequalities recalling that $m_{i,i+1}(a,0) = m$ and $m_{i,i+1}(a)\leq m+1$.

\smallskip

$(iv)$ Recall that $\varrho(a)$ is the greatest eigenvalue of Leslie Matrix sized $a+1$. Since $(a_n)$ goes to infinity, $(\varrho(a_n))_n$ goes to $m + 1$, see Lemma~\ref{lem:technical}. Then, there exists $n_0$ such that $\varrho(a_{n_0}) > \varrho$. Considering again the coupling with a multitype branching process with $a_{n_0}+1$ types starting at generation $n_0$ with  individuals aged $0$ yields
\[
m_n \geq m_{n_0}(0,0) M_{n_0, n},
\]
where $M_{n_0,n}$ is the mean number of individuals of this multitype branching process. Using Perron-Frobenius Theorem, there exists a positive constant $c$ such that $M_{n_0, n} \sim c \varrho(a_{n_0})^n$, which gives the expected result.
\end{proof}

\subsubsection{Proof of Theorem~\ref{thm:main} $(ii)$}\label{sec:main_ii}

The proof relies on the following lemma, which focuses on a typical individual, following the inspiration of spine decomposition (see e.g. \cite{CR, LPP, KLPP}) of branching processes.
For every $0\leq i  <n$ and $0 \leq a \leq a_i$, we define
\[
P_{i,n}(a,b)=m_{i,i+1}(a,b)\frac{m_{i+1,n}(b)}{m_{i,n}(a)},
\]
Then by recursion we can set $Q_{n,n}(a,a)=1$ and $Q_{n,n}(a,b)=0$ if $a\ne b$ and
\[
Q_{i,n}(a,b) := \sum_{0 \leq k \leq a_{i+1}} P_{i,n}(a,k) Q_{i+1,n}(k,b). 
\]
Identity \eqref{eq:mean_rec} ensures that $P_{i,n}$ is a Markov kernel. Then, $Q_{i,n}(.,.)$ is the semigroup of the associated  inhomogeneous Markov Chain between generations $i$ and $n$. We are proving in next Lemma~\ref{lem:ergodic} that it  satisfies Doeblin's condition and we are deriving geometric estimates for asymptotic behavior of the means $m_{i,n}$ via 
a many-to-one formula. Let us note that this technic is inspired by probabilistic methods, while the result  is analytical. It  can be extended for more general product of (non-negative) matrices in finite or infinite dimension using the  ergodicity  of  the Markov chain $Q$.

\begin{lemma}\label{lem:ergodic}
$(i$-\emph{Many-to-one formula}$)$. For all $0\leq i \leq n$ and $0 \leq a \leq a_i$,
\[
m_{i,n}(a,.)=m_{i,n}(a)Q_{i,n}(a,.).
\]
\noindent $(ii$-\emph{Doeblin's condition}$)$. For all $0\leq i < n$ and $0 \leq a\leq a_i$,
$$
P_{i,n}(a, .) \geq c \delta_0(.),
$$
where $c = m/(m+1)$ and $\delta_0$ is the Dirac mass at point $0$.

\noindent $(iii$-\emph{Geometrical Ergodicity}$)$. For any $i \leq n$ and probability measures $\mu$ and $\nu$ on $\{0,\ldots,a_i\}$,
\[
d_{TV}(Q_{i,n}(\mu, \cdot) , Q_{i,n}(\nu, \cdot)) \leq (1 - c)^{n-i} d_{TV}(\mu,\nu),
\]
where $d_{TV}(\cdot, \cdot)$ is the total variation distance on $\mathcal P(\N)$ defined by $$d_{TV}(\mu, \nu)=\frac{1}{2}\sum_{k \in \N} \vert \mu_k - \nu_k \vert.$$

\noindent$(iv)$ Setting
\[
E_{i,n}(a) :=\frac{ m_{i,n+1}(a)}{m_{n+1}}- \frac{m_{i,n}(a)}{m_{n}},
\]
there exists $C_m>0$,
such that for all $0 \leq i < n$, $0 \leq a \leq a_i$,
\[
\left\vert E_{i,n}(a)\right\vert \leq C_m \frac{ (1-c)^{n-i}}{m_i}.
\]
\end{lemma}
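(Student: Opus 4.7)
The plan is to handle the four parts in order, each building on the previous.

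For the many-to-one formula (i), I would induct on $n-i$, with trivial base case $n=i$ (where $Q_{i,i}=\mathrm{Id}$ and $m_{i,i}(a,b)=\delta_a(b)$). For the inductive step, the branching/Markov decomposition at generation $i+1$ yields
\begin{equation*}
m_{i,n}(a,b) \;=\; \sum_k m_{i,i+1}(a,k)\, m_{i+1,n}(k,b);
\end{equation*}
substituting the induction hypothesis $m_{i+1,n}(k,b) = m_{i+1,n}(k)\, Q_{i+1,n}(k,b)$ and pulling out the factor $m_{i,n}(a)$ exactly reproduces the defining recursion of $Q_{i,n}$. This incidentally shows that $Q_{i,n}(a,\cdot)$ is a probability measure, since $\sum_b m_{i,n}(a,b)=m_{i,n}(a)$. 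For the Doeblin condition (ii), I would directly compute $P_{i,n}(a,0)=m\cdot m_{i+1,n}/m_{i,n}(a)$ using $m_{i,i+1}(a,0)=m$, and then apply the bound $m_{i,n}(a)\leq(m+1)\,m_{i+1,n}$ from Lemma~\ref{lem:mean_behavior}(iii) to conclude $P_{i,n}(a,0)\geq m/(m+1)=c$.

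For the geometric ergodicity (iii), the defining recursion makes $Q_{i,n}$ the inhomogeneous product $P_{i,n}P_{i+1,n}\cdots P_{n-1,n}$ of Markov kernels, each with Dobrushin contraction coefficient at most $1-c$ by the minorization (ii). A standard coupling argument (write $P_{j,n}(a,\cdot)=c\,\delta_0(\cdot)+(1-c)\widetilde P_{j,n}(a,\cdot)$ and couple the ``$c$-part'' to a common point) iterated $n-i$ times yields the claimed geometric contraction of total variation.

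The main work is (iv). Set $f(b):=m_{n,n+1}(b)$, which lies in $\{m,m+1\}$ (so $f\geq m$ and $\mathrm{osc}(f)\leq 1$). Part (i) gives $m_{i,n+1}(a) = m_{i,n}(a)\sum_b Q_{i,n}(a,b)f(b)$ and, with $i=0$, $m_{n+1}=m_n\sum_b Q_{0,n}(0,b)f(b)$, hence
\begin{equation*}
E(a,i,n) \;=\; \frac{m_{i,n}(a)}{m_n}\cdot\frac{\sum_b Q_{0,n}(0,b)f(b)\,-\,\sum_b Q_{i,n}(a,b)f(b)}{\sum_b Q_{0,n}(0,b)f(b)}.
\end{equation*}
The denominator is at least $m$. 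For the numerator, Chapman--Kolmogorov $Q_{0,n}=Q_{0,i}Q_{i,n}$ combined with (iii) gives $d_{TV}(Q_{0,n}(0,\cdot),Q_{i,n}(a,\cdot))\leq(1-c)^{n-i}$, which together with $\mathrm{osc}(f)\leq 1$ bounds the numerator by $(1-c)^{n-i}$.

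The remaining input is an upper bound of the prefactor $m_{i,n}(a)/m_n$ of order $1/m_i$. Writing $m_n = m_i\sum_b Q_{0,i}(0,b)\,m_{i,n}(b)$ (again via (i) applied at $i'=0$) and invoking the lower bound $m_{i,n}(b)\geq m\,m_{i+1,n}\geq \tfrac{m}{m+1}\,m_{i,n}(a)$ from Lemma~\ref{lem:mean_behavior}(iii) gives $m_{i,n}(a)/m_n \leq (m+1)/(m\,m_i)$. Combining everything yields $|E(a,i,n)|\leq C_m(1-c)^{n-i}/m_i$ with $C_m=(m+1)/m^2$. The main obstacle is precisely this last step: one has to route the comparison between $m_{i,n}(a)$ and $m_n$ through the intermediate generation $i$ in a way that produces $1/m_i$ (and not $1/m^i$), which is what makes the final bound summable and, eventually, usable in the $L^2$ martingale argument of Theorem~\ref{thm:main}.
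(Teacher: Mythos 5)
Your proofs of (i), (ii), and (iii) follow the paper's approach exactly, and your treatment of (iv) is mostly correct and in fact slightly cleaner than the paper's — but it rests on one false identity that needs repair.

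In (iv), you invoke ``Chapman--Kolmogorov $Q_{0,n}=Q_{0,i}Q_{i,n}$.'' This is not true here. The kernels $P_{j,n}$ carry the terminal index $n$, so $Q_{i,n}$ is the product $P_{i,n}P_{i+1,n}\cdots P_{n-1,n}$ and the $Q$'s do \emph{not} form a two-parameter semigroup: $Q_{0,i}=P_{0,i}\cdots P_{i-1,i}$ uses $P_{j,i}$, not $P_{j,n}$, and these differ. Concretely, using (i), $(Q_{0,i}Q_{i,n})(0,c)=\sum_b \frac{m_{0,i}(0,b)}{m_{0,i}(0)}\frac{m_{i,n}(b,c)}{m_{i,n}(b)}$, whereas $Q_{0,n}(0,c)=\frac{\sum_b m_{0,i}(0,b)\,m_{i,n}(b,c)}{m_{0,n}(0)}$; equality would force $m_{i,n}(b)$ to be constant in $b$, which fails. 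What \emph{is} true, and all you actually need, is that $Q_{0,n}=R\,Q_{i,n}$ with $R:=P_{0,n}\cdots P_{i-1,n}$ a Markov kernel (this follows by iterating the defining recursion $Q_{j,n}=P_{j,n}Q_{j+1,n}$). Then $Q_{0,n}(0,\cdot)=Q_{i,n}(\mu,\cdot)$ with $\mu:=R(0,\cdot)$ a probability on $\{0,\dots,a_i\}$, and (iii) applied to $\mu$ and $\delta_a$ gives $d_{TV}(Q_{0,n}(0,\cdot),Q_{i,n}(a,\cdot))\le(1-c)^{n-i}$ as you wanted. With that one-word fix your argument is correct; the paper instead telescopes along $k=1,\dots,i$ using $Q_{k-1,n}=P_{k-1,n}Q_{k,n}$ and sums a geometric series, which yields the same order of bound with a worse constant. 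The rest of your (iv) — the oscillation bound giving numerator $\le(1-c)^{n-i}$ and denominator $\ge m$, and the prefactor estimate $m_{i,n}(a)/m_n\le(m+1)/(m\,m_i)$ via routing through generation $i$ and Lemma~\ref{lem:mean_behavior}(iii) — is correct and matches the paper's bound.
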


\begin{proof} We prove $(i)$ by induction and we assume that $m_{i+1,n}(k,b)=m_{i+1,n}(k) Q_{i+1,n}(k,b)$.
Using \eqref{eq:mean_rec}, we get
\Bea
\frac{m_{i,n}(a, b)}{m_{i,n}(a)}&=& \sum_{k \geq 0} \frac{m_{i,i+1}(a,k) m_{i+1,n}(k)}{m_{i,n}(a)} Q_{i+1,n}(k,b)\\
&=&\sum_{k \geq 0} P_{i,n}(a,k) Q_{i+1,n}(k,b),
\Eea
which proves $(i)$, since the right hand side is equal to $Q_{i,n}(a,b)$ by definition.

\noindent $(ii)$ From~\eqref{eq:mean_rec} applied to $k=i+1$,
\begin{align*}
m_{i,n}(a) &\leq m_{i,i+1}(a,0) m_{i+1,n}(0) + m_{i,i+1}(a,a+1) m_{i+1,n}(a+1) \\
&\leq m m_{i+1,n}(0) + m_{i+1,n}(a+1) \\
&\leq (m + 1) m_{i+1,n}(0),
\end{align*}
since an individual of age $0$ lives longer than an individual of age $a+1$. Thus, thanks to Lemma~\ref{lem:mean_behavior} $(iii)$,
\begin{align*}
P_{i,n}(a,0) &= m_{i,i+1}(a,0) \frac{m_{i+1,n}(0)}{m_{i,n}(a)} = m \frac{m_{i+1,n}}{m_{i,n}(a)} 
 \geq \frac{m}{m+1}.
\end{align*}

\noindent $(iii)$ Doeblin's condition obtained in $(ii)$ yields a contraction for the Total Variation distance (see e.g.~Lemma~4.3.13~in~\cite{CMR})
$$d_{TV}(P_{i,n}(\mu,.) ,P_{i,n}(\nu,.)) \leq (1-c)d_{TV}(\mu,\nu),$$
for any $0\leq i\leq n$ and by a classical induction
$$
d_{TV}(Q_{i,n}(\mu,.) ,Q_{i,n}(\nu,.)) \leq (1 -c)^{n-i}d_{TV}(\mu, \nu).
$$

\noindent $(iv)$ Using again \eqref{eq:mean_rec}, we get
\begin{align*}
m_{i,n+1}(a) &= \sum_{0\leq k \leq a_n} m_{i,n}(a, k) m_{n,n+1}(k) \\
&= m_{i,n}(a) \sum_{0 \leq k \leq a_n} Q_{i,n}(a, k) m_{n,n+1}(k).
\end{align*}
In particular,
\[
m_{n+1} = m_{0,n+1}(0) = m_n \sum_{0\leq k \leq a_n} Q_{0,n}(0,k) m_{n,n+1}(k).
\]
Thus,
\Bea
E_{i,n}(a)&=& \frac{m_{i,n}(a)}{m_n}\left[ \frac{\sum_{0\leq k\leq a_n} Q_{i,n}(a,k) m_{n,n+1}(k)}{\sum_{0\leq k\leq a_n} Q_{0,n}(0,k) m_{n,n+1}(k)}-1\right] \\
 &=& \frac{m_{i,n}(a)}{m_n} \cdot \frac{\sum_{0\leq k\leq a_n} (Q_{i,n}(a,k) - Q_{0,n}(0,k)) m_{n,n+1}(k)}{\sum_{0\leq k\leq a_n} Q_{0,n}(0,k) m_{n,n+1}(k)}. 
\Eea
Recalling that $m_{n,n+1}(k)$ is equal to $m$ or $m+1$ whether individual aged $k$ belongs to $\GG_n^-$ or $\GG_n^+$, we get
\[
|E_{i,n}(a)| \leq \frac{(m+1) m_{i,n}(a)}{m m_n} d_{TV}(Q_{i,n}(a, \cdot), Q_{0,n}(0, \cdot)).
\]
Now, from the triangular inequality,
\[
d_{TV} (Q_{i,n}(a, \cdot) , Q_{0,n}(0, \cdot))\leq d_{TV} (Q_{i,n}(a, \cdot) ,Q_{i,n}(0, \cdot)) + \sum_{k=1}^i d_{TV} (Q_{k,n}(0,\cdot) , Q_{k-1,n}(0,\cdot)).
\]
Then, using previous points $(ii)$ and $(iii)$,
\begin{align*}
d_{TV}(Q_{k,n}(0,\cdot) , Q_{k-1,n}(0,\cdot)) &= d_{TV} (Q_{k,n}(0,\cdot) , Q_{k,n}(P_{k-1,n}(0,\cdot),\cdot)) \\
&\leq (1 - c)^{n-k} d_{TV} (\delta_0 , P_{k-1,n}(0,\cdot)) \\
&\leq (1-c)^{n-k},
\end{align*}
and from Lemma~\ref{lem:mean_behavior} $(i)$ and $(iii)$, there exists a constant $C_m' > 0$ \st
\[
|E_{i,n}(a)| \leq C_m' (1 - c)^{n-i} \frac{m_{i,n}(a)}{m_n}\leq C_m'(1 - c)^{n-i} \frac{m_{i,n}}{m_n}\leq C_m'(1 - c)^{n-i} \frac{1}{m_{i}(0,0)}.
\]
Recalling that $m_{i}(0,0) = m_{i-1} m \geq m_i m / (m+1)$, we obtain $(iv)$.
\end{proof}

We prove now the convergence of $Z_n/m_n$ using the $L^2$ moments of its successive increments. This method has already been powerfully 
used for branching processes and also allows to obtain estimates on the speed of convergence, see e.g. \cite{Asm}.
\begin{proof}[Proof of Theorem~\ref{thm:main} $(ii)$]
To show that $(Z_n / m_n)$ converges a.s., we prove that
 \be
\label{cvser}
\sum_{n\geq 0} \E\left[ \left(\frac{Z_{n+1}}{m_{n+1}} -\frac{Z_n}{m_n}\right)^2\right]^{1/2}<\infty
 \ee
 by splitting this increment into the martingale increment and the drift term.
 First, we recall that
 $$Z_{n+1}= \sum_{u \in \GG_n} \sharp X(u)\quad \text{ and }\quad \GG_n=\GG_n^+\sqcup \GG_n^-.$$
  For $u \in \GG_n$, either $u$ belongs to $\GG_n^+$ and $\E( \sharp X(u) \ \vert \ \mathcal F_n)=m_{n,n+1}(\ZZ(u)) = m+1$, or $u \in \GG_n^-$ and $\E( \sharp X(u) \ \vert \ \mathcal F_n)=m_{n,n+1}(\ZZ(u)) = m$. Then
\be
\label{ajj}
\E(Z_{n+1} \ \vert \ \mathcal F_n)=\sum_{u \in \GG_n} m_{n,n+1}(\ZZ(u))=(m + 1) \sharp \GG_n^+ + m \sharp \GG_n^-\ee
and
\[
m_{n+1}=(m+1)\E(\#\GG_n^+)+m\E(\#\GG_n^-).
\]

Using Minkowski's inequality and denoting $\|\cdot\|_2$ the $L^2$ norm on r.v., we obtain
\begin{align}
\left\| \frac{Z_{n+1}}{m_{n+1}} - \frac{Z_n}{m_n} \right\|_2 &= \left\| \frac{\sum_{u\in \GG_n^+\sqcup \GG_n^-} \sharp X(u)}{m_{n+1}} -\frac{\E(Z_{n+1} \ \vert \ \mathcal F_n)}{m_{n+1}}+\frac{\E(Z_{n+1} \ \vert \ \mathcal F_n)}{m_{n+1}}- \frac{Z_n}{m_{n}} \right\|_2 \nonumber \\
&\leq \left\| \frac{\sum_{u\in \GG_n^{+}} [\sharp X(u)-(m+1)]}{m_{n+1}}\right\|_2 + \left\| \frac{\sum_{u\in \GG_n^{-}} [\sharp X(u)-m]}{m_{n+1}}\right\|_2 \nonumber \\
& \qquad + \left\| \frac{\E(Z_{n+1} \ \vert \ \mathcal F_n)}{m_{n+1}}- \frac{Z_n}{m_n}\right\|_2. \label{eq:ineg}
\end{align}

First remark that, when $u$ belongs to $\GG_n^+$, then $\sharp X(u)$ is distributed as $R + 1$. Thus, $(\sharp X(u) - (m + 1))_{u\in\GG_n^+}$ are independent, centered r.v. and
\[
\E\left[ \left(\sum_{u\in\GG_n^+} [\sharp X(u) - (m +1)]\right)^2\right] = \E[(R-m)^2] \E[\sharp \GG_n^+].
\]
Adding that the individuals in $\GG_n^+$ survive, $\E[\sharp \GG_n^+] \leq m_{n+1}$ and
\begin{equation}\label{eq:terme_1}
\left\| \frac{\sum_{u\in \GG_n^{+}} [\sharp X(u)-(m+1)]}{m_{n+1}}\right\|_2 \leq \frac{\sigma}{\sqrt{m_{n+1}}}.
\end{equation}
In the same manner, recalling that $m_{n+1}\geq m_nm$ from Lemma \ref{lem:mean_behavior} $(iii)$,
\begin{equation}\label{eq:terme_2}
\left\| \frac{\sum_{u\in \GG_n^{-}} [\sharp X(u)-m]}{m_{n+1}}\right\|_2 \leq \frac{\sqrt{m_n \E[(R-m)^2]}}{m_{n+1}} \leq \frac{\sigma}{\sqrt{m m_{n+1}}}.
\end{equation}
To study the last term, we write
\[
\frac{\E(Z_{n+1} \ \vert \ \mathcal F_n)}{m_{n+1}}- \frac{Z_n}{m_n}= \sum_{u\in\GG_n} F_n(u) ,
\]
where using $(\ref{ajj})$,
\[
F_n(u)=\frac{m_{n,n+1}(\ZZ(u))}{m_{n+1}}-\frac{1}{m_n}.
\]
\medskip

\noindent We use now standard arguments involving the genealogy of the population to exploit the branching property in the lineages. Given two individuals $u$ and $v$ living in generation $n$, let us consider their youngest common ancestor $\omega$ at generation $i-1$. Then, individuals $u$ and $v$ arise from two independent branching processes starting at time $i$ from descendants of $\omega$. We denote pairs of siblings born at generation $i$:
\[
\VV_{i,n} = \{(u,v) \in \GG_n : \exists \omega \in \GG_{i-1}, \ a \succcurlyeq 0, b \succcurlyeq 0 \ s.t. \ u \succcurlyeq \omega a, v\succcurlyeq \omega b, a_0 \neq b_0 \}.
\]

Then,
\begin{align*}
\E\left[\left(\sum_{u\in\GG_n} F_n(u) \right)^2\right] &= \E\left[\sum_{(u,v) \in \GG_n^2} F_n(u) F_n(v) \right] \\
&= \sum_{i=1}^{n} \E\left[\sum_{(u, v) \in \VV_{i,n}} F_n(u) F_n(v)\right] + \E\left[ \sum_{u\in\GG_n} F_n(u)^2 \right].
\end{align*}

On the one side,
\begin{align*}
\E\left[\sum_{u\in\GG_n} F_n(u)^2\right]
& \leq \frac{1}{m_n} + \frac{1}{m_{n+1}^2} \E\left[\sum_{u \in \GG_n} m_{n,n+1}(\ZZ(u))^2\right] \\
& \leq \frac{1}{m_n} + \frac{(m+1)^2 m_n}{m_{n+1}^2} \\
& \leq \left(1 + \frac{(m+1)^2}{m^2}\right) \frac{1}{m_n},
\end{align*}
recalling that $m_{n+1}\geq m_nm$ from Lemma \ref{lem:mean_behavior} $(iii)$.

On the other side, 
using 
\Bea
E_{i,n}(a)&=&\E\left(\sum_{u \in \GG_n} F_n(u) \ \vert \ \ZZ_{i}=\delta_{a}\right)\\
&=& \E\left(\frac{\sum_{u \in \GG_n^+} (m+1)+\sum_{u \in \GG_n^-} m}{m_{n+1}} -\frac{\# \GG_n}{m_n} \bigg\vert \ \ZZ_{i}=\delta_{a} \right)\\
&=&\frac{\E(\# \GG_{n+1} \vert \ \ZZ_{i}=\delta_{a} )}{m_{n+1}}-\frac{\E(\# \GG_{n} \vert \ \ZZ_{i}=\delta_{a})}{m_{n}}\\
&=&\frac{m_{i,n+1}(a)}{m_{n+1}}-\frac{m_{i,n}(a)}{m_n}
\Eea
and writing
$$V_{i,a,a',a''}:=\E\left( Z_{i}(a')Z_{i}(a'') \vert \ZZ_{i-1}=\delta_a\right) ,$$ 
we 
condition on the ages of the individuals at generation $i-1$ and $i$ to get
\Bea
&&\E\left[\sum_{(u, v) \in \VV_{i,n}} F_n(u) F_n(v)\right]\\
&&\quad =\E\left[\E\left[\sum_{(u, v) \in \VV_{i,n}} F_n(u) F_n(v)\ \bigg\vert \ \mathcal F_{i}\right] \right] \\
&& \quad = \sum_{a=0}^{a_{i-1}} m_{0,i-1}(0,a)\sum_{a', a'' \in \{0,a+1\}} V_{i,a,a',a''} E_{i,n}(a')E_{i,n}(a'').
\Eea
Finally, using Lemma~\ref{lem:ergodic} $(iv)$ and the fact that $V_{i,a,a',a''}$ is bounded by $\E((R+1)^2)$, 
\Bea
\left\| \frac{\E(Z_{n+1} \ \vert \ \mathcal F_n)}{m_{n+1}}-\frac{Z_n}{m_n} \right\|_2^2 
&\leq & \E((R+1)^2) C_m^2 \sum_{i=1}^{n} \frac{(1-c)^{2(n-i)}}{m_{i+1}} + C_R \frac{1}{m_n},
\Eea
where the constant $C_R$ depends only on $m$ and $\sigma$ and may change from line to line.
Then, splitting the sum in two parts and recalling that $(m_i)$ goes to infinity so is bounded from below,
\begin{equation}\label{eq:terme_3}
\left\| \frac{\E(Z_{n+1} \ \vert \ \mathcal F_n)}{m_{n+1}}-\frac{Z_n}{m_n} \right\|_2^2\leq C_R \left[\sum_{n/2\leq k \leq n }\frac{1}{m_k}+ (1-c)^n\right].
\end{equation}
Finally, putting~\eqref{eq:terme_1}, \eqref{eq:terme_2} and \eqref{eq:terme_3} in \eqref{eq:ineg} yields
\be \label{aveclesconstantes}
\left\| \frac{Z_{n+1}}{m_{n+1}} - \frac{Z_n}{m_n} \right\|_2\leq C_R \sum_{n/2\leq k \leq n }\frac{1}{\sqrt{m_k}}+ \left(1-c\right)^{n/2}.
\ee
Adding that $m_k$ grows geometrically from Lemma~\ref{lem:mean_behavior} $(iv)$, ensures that the right hand side is summable and \eqref{cvser} holds. We get the $L^2$ convergence of $Z_n/m_n$ and Cauchy Schwarz inequality also ensures that
\be
\label{ctrl1}
\sum_{n\in \N} \E(\vert Z_{n+1}/m_{n+1}-Z_n/m_n \vert)<\infty.
\ee
Then $Z_n/m_n$ converges a.s. and in $L^1$ to a non negative finite r.v. $W$. As $\E(Z_n/m_n)=1$, $\P(W>0)>0$. Adding that on Nonext, $Z_n$ and then $Z_n(0)$ have to go to infinity, a standard branching argument (see Lemma~\ref{lem:nonextinction} in Appendix for details) ensures that $\{W>0\}$=Nonext. This ends up the proof.
\end{proof}

\subsubsection{Proof of Theorem~\ref{thm:main} $(iii)$}\label{sec:main_iii}

To prove the third part of Theorem~\ref{thm:main}, we use the following  standard law of large numbers, in the vein of \cite{AK}, whose proof is provided for safe of completeness.

\begin{lemma}\label{lem:lgn}
Let $(X_n)_n$ be a sequence of integer valued r.v. We assume that for each integer $n$, $(X_{i,n} : i\geq0)$ are identically distributed centered r.v., which are independent of $X_n$. Moreover, we assume that $(X_{i,n} : i, n\geq 0)$ are bounded in $L^2$. Then,
\[
\frac{1}{X_n} \sum_{i=1}^{X_n} X_{i,n} \rightarrow 0 \quad \text{a.s on } \left\{ \liminf_{n\rightarrow \infty} \frac{1}{n} \log(X_n)>0 \right\}.
\]
\end{lemma}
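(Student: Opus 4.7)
The plan is to combine a conditional Chebyshev estimate with the Borel--Cantelli lemma: the liminf hypothesis ensures that $X_n$ has at least geometric growth, which supplies the summability needed for Borel--Cantelli. The three steps are (i) a conditional variance bound on $S_n/X_n$ given $X_n$, (ii) unconditioning against a geometric-growth event $\{X_n \geq e^{\delta n}\}$, and (iii) a countable reduction to conclude on the full event.

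Step (i): Set $\sigma^2 := \sup_n \E[X_{1,n}^2]$, which is finite by the $L^2$-boundedness and centredness assumptions, and write $S_n := \sum_{i=1}^{X_n} X_{i,n}$. For fixed $n$, conditionally on $X_n = k$, the sum $S_n$ is a sum of $k$ i.i.d.\ centered r.v.\ independent of $X_n$ (this is exactly where the independence hypothesis enters), so Chebyshev's inequality yields
\[
\P(|S_n/X_n| > \varepsilon \mid X_n = k) \leq \frac{\sigma^2}{k \varepsilon^2}.
\]

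Step (ii): Fix $\delta > 0$ and let $B_{n,\delta} := \{X_n \geq e^{\delta n}\}$. Integrating the conditional bound against $\ind_{B_{n,\delta}}$ gives
\[
\P\bigl(|S_n/X_n| > \varepsilon,\, B_{n,\delta}\bigr) \leq \E\!\left[\frac{\sigma^2}{X_n \varepsilon^2} \ind_{B_{n,\delta}}\right] \leq \frac{\sigma^2}{\varepsilon^2\, e^{\delta n}},
\]
which is summable in $n$. By Borel--Cantelli, the event $\{|S_n/X_n| > \varepsilon\} \cap B_{n,\delta}$ occurs only finitely often almost surely.

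Step (iii): Denote $A := \{\liminf_n \frac{1}{n}\log X_n > 0\}$ and $A_\delta := \{\liminf_n \frac{1}{n}\log X_n > \delta\}$, so that $A = \bigcup_{k \geq 1} A_{1/k}$. On $A_\delta$, by the very definition of $\liminf$, one has $X_n \geq e^{\delta n}$ for all $n$ large enough, so $A_\delta \subset \liminf_n B_{n,\delta}$. Consequently, on $A_\delta$ the event $\{|S_n/X_n| > \varepsilon\}$ itself occurs only finitely often a.s.; intersecting over rational $\varepsilon > 0$ yields $S_n/X_n \to 0$ a.s.\ on $A_\delta$, and the countable union over $\delta = 1/k$, $k\geq 1$, covers $A$. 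The only minor subtlety is slicing along a rational threshold $\delta$ rather than seeking a single uniform Borel--Cantelli estimate on $A$; no genuine obstacle arises, since independence is used only inside step (i) for fixed $n$ and the hypothesis is crafted precisely so that the geometric decay $e^{-\delta n}$ dominates the $1/X_n$ bound coming from Chebyshev.
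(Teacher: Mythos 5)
Your proposal is correct and follows essentially the same route as the paper: conditional Chebyshev on $X_n$, Borel--Cantelli against an auxiliary growth event, then a countable reduction to cover the liminf event. The only cosmetic difference is the choice of threshold -- you use the exponential event $\{X_n\geq e^{\delta n}\}$ (which matches the hypothesis directly), whereas the paper uses the weaker polynomial event $\{X_n\geq cn^2\}$; both yield a summable tail and the same conclusion.
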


\begin{proof}[Proof.]
Fix $c\in (0,\infty)$ and $\epsilon>0$ and denote by
$$E_n^{(c)}=\left\{ \left\vert \frac{1}{X_n} \sum_{i=1}^{X_n} X_{i,n} \right\vert \geq \epsilon \right\} \cap \{X_{n} \geq cn^2\}.$$
Using Bienaym\'e Tchebytchev inequality and $\E(X_{i,n} X_{j,n} \ \vert \ X_n)=\E(X_{i,n} \ \vert \ X_n)\E(X_{j,n} \ \vert \ X_n)=0$ when $i\ne j$, we get that
$$
\P(E_n^{(c)})\leq \E\left( \frac{1}{\eg^2 X_n} \ind_{X_n \geq cn^2} \sup_{i} \E(X_{i,n}^2)\right)\leq \frac{1}{c\eg^2n^2} \sup_{i} \E(X_{i,n}^2),
$$
where the supremum is bounded with respect to $n$ by assumption. As a consequence, $$\sum_{n\geq 0} \P(E_n^{(c)})<\infty$$
 and $\P(E_n^{(c)} \text{ occurs i.o.} )=0$ by Borel-Cantelli lemma. Then, a.s. on the event $\{\forall n\geq 0 : X_n\geq cn^2\}$,
$$\left\vert \frac{1}{X_n} \sum_{i=1}^{X_n} X_{i,n} \right\vert \stackrel{n\rightarrow \infty}{\longrightarrow} 0.$$
Adding that 
\[
\left\{ \liminf_{n\rightarrow \infty} \frac{1}{n} \log(X_n)>0 \right\} \subset \bigcup_{c\in \Q\cap (0,1)}\{\forall n\geq 0 : X_n\geq cn^2\}
\]
ends up the proof.
\end{proof}

\begin{proof}[Proof of Theorem~\ref{thm:main} $(iii)$]
First we prove that a.s on Nonext,
\be
\label{leszeros}
\frac{Z_{n+1}(0)}{Z_n} \stackrel{n\rightarrow \infty}{\longrightarrow} m
\ee
using the previous law of large numbers. Indeed, recall from equation~\eqref{eq:nv_nes_sans_im} that
$$Z_{n+1}(0) = \sum_{j=1}^{Z_{n}} R_{j,n+1},$$ 
where $(R_{j,n+1}, j \in \N)$ are i.i.d. and distributed as $R$, whose variance is finite. Moreover, Theorem \ref{thm:main} $(ii)$ and the fact that
$m_n \geq C\varrho^n$ for some $\varrho>1$ ensures that, on Nonext, $\liminf_{n\rightarrow \infty} \frac{1}{n} \log(Z_n)>0$. Then Lemma~\ref{lem:lgn} ensures that
$$\frac{Z_{n+1}(0)}{Z_n} \stackrel{n\rightarrow \infty}{\longrightarrow} \E(R)$$
a.s. on Nonext.
Now we just note that
$Z_n(a)=Z_{n-a}(0)$ to write
$$\frac{Z_n(a)}{Z_n}=\frac{Z_{n-a}(0)}{Z_{n-a-1}}\frac{Z_{n-a-1}}{Z_n}.$$
Using Theorem~\ref{thm:main} $(ii)$ and Lemma~\ref{lem:mean_behavior} $(ii)$ yields 
$$\lim_{n\rightarrow \infty} \frac{Z_{n-a-1}}{Z_n}=\lim_{n\rightarrow \infty} \frac{m_{n-(a+1)}}{m_n}=(m+1)^{-a-1}$$
a.s. on Nonext, which ends up the proof.
\end{proof}

\subsubsection{Proof of Corollary~\ref{cor:regimes}}\label{sec:cor_regimes}
First, recalling ~\eqref{eq:reproduction_sans_im}, we have
\begin{align*}
\E[Z_{n+1}|\FF_n] &= \E\left[\sum_{k=0}^{a_{n+1}} Z_{n+1}(k) | \FF_n\right] \\
&= \E\left[\sum_{k=0}^{a_{n+1}-1} Z_n(k) + \sum_{j=0}^{Z_n} R_{j,n+1} | \FF_n\right] \\
&= \sum_{k=0}^{a_{n+1}-1} Z_n(k) + m Z_n \\
&\leq (m + 1) Z_n,
\end{align*}
since $a_{n+1} - 1 \leq a_n$. Thus, $M_n = (m+1)^{-n} Z_n$ is a supermartingale.

Since $M_n$ is a non-negative supermartingale, it converges a.s. to a non-negative finite random variable $M$.\\
First, when $\liminf_{n\rightarrow \infty} a_n / \log  n>1/ \log(m+1)$, using Lemma~\ref{lem:mean_asymptotic} $(i)$, we get that $m_n$ is of the same magnitude as $(m+1)^n$. Part $(i)$ of Corollary~\ref{cor:regimes} is then a consequence of Theorem~\ref{thm:main}.\\
Similarly, Lemma~\ref{lem:mean_asymptotic} $(ii)$ ensures that $m_n/(m+1)^n$ goes to $0$ as $n\rightarrow \infty$ as soon as $ \limsup_{i\rightarrow \infty} a_n / \log n< 1/ \log(m+1)$ and we get $(ii)$.

\subsection{Aging branching process with immigration}\label{sec:immigration}

We now consider the aging branching process with immigration $(\bZ_n : n \in \mathbb N)$ defined in~\eqref{eq:reproduction} under  Assumptions~\eqref{as:age} and~\eqref{as:immigration}. Recall that $m_n$, defined in Equation~\eqref{def:min} and satisfying ~\eqref{eq:mn}, is the mean number of individuals in absence of immigration.

\begin{theorem}\label{thm:immigration}
$(i)$ The process $(\bZ_{n}/m_n : n \in \N)$ converges a.s. to a finite positive random variable. \\
$(ii)$ For  each  $a\geq 0$,  $(\bZ_n(a) / \bZ_n : n \in \N)$ converges a.s. to $m / (m+1)^{a+1}$.
\end{theorem}

 We remark that, contrary to the previous results, we are not needing the $L^2$ convergence of $\bZ_n / m_n$ in the sequel, whose proof seems quite technical and is not considered in this paper. Before the proof, we provide  the asymptotic behavior of the cumulative number of newborn individuals until generation $n$, which will be useful in next Section~\ref{sec:queueing}:
\[
\bbZ_n(0) = \sum_{k=0}^n \bZ_k(0).
\]
\begin{corollary}\label{thm:newborn}
The process $(\bbZ_n(0) / m_n : n \in \N)$ converges a.s. to a finite positive r.v.
\end{corollary}

\subsubsection{Proof of Theorem \ref{thm:immigration}}
We start with some technical additional results on the mean number $m_n$ of individuals in generation $n$.
Lemma~\ref{lem:ergodic} $(iv)$ ensures that for any $k\geq 0$,
$$\sum_{n\geq k} \sum_{a\leq a_n} \vert E(a,k,n) \vert <\infty$$
since $(a_n)$ grows at most linearly. Then
$$\sum_{n\geq k} \bigg\vert \frac{m_{k,n+1}}{m_{n+1}}- \frac{m_{k,n}}{m_{n}} \bigg\vert <\infty$$
and the following limit exists for $k\geq 0$,
\be
\label{cvdeux}
 \alpha_k=\lim_{n\rightarrow \infty} \frac{m_{k,n}}{m_n}.
\ee
We also remark that from Lemma~\ref{lem:mean_behavior} $(i)$, $m_{k,n}/m_n \leq 1/m_k(0,0)$. Thus, $\ag_k \leq 1/m_k(0,0)$. Adding that $m_k(0,0)\geq mm_{k-1}$, Lemma~\ref{lem:mean_behavior} $(iv)$ ensures that $(\ag_k)_k$ goes to $0$ geometrically and for any $\varrho \in (1,m+1)$, there exists $C>0$ such that
\be 
\label{vitesse}
\alpha_k\leq C\varrho^{k}.
\ee

\begin{proof}[Proof of Theorem \ref{thm:immigration} $(i)$]
 We use that $\bZ_n$ is the sum over $k\leq n$ of the descendants of immigrants arrived in generation $k$. We recall that notations $m_n$ and $m_{k,n}$ are introduced in Equation~\eqref{def:min}.

Moreover, the successive number of descendants of an immigrant $i$ in generation $k$ is equal to an aging branching process $(Z_{k,n}^{(i)}~:~n \geq k)$, without immigration, started at generation $k$, with maximal age $(a_n)_{n\geq k}$. Thus,
\be
\label{etoile}
\frac{\bZ_n}{m_n} = \sum_{k=0}^n \sum_{i=1}^{I_k}\frac{m_{k,n}}{m_n}W_{k,n}^{(i)} \quad a.s.,
\ee
where for each $k\leq n$,
\be
\label{defWi}
W_{k,n}^{(i)} := \frac{Z_{k,n}^{(i)}}{m_{k,n}}.
\ee
The sequence $(W_{k,n}^{(i)}, i \geq 1)$ is a sequence of i.i.d. r.v.\\
Theorem~\ref{thm:main} obtained in previous Section~\ref{sec:sans_immigration} ensures that for each $k \geq 0,i \geq 1$, 
\be
\label{cvun}
W_{k,n}^{(i)} \stackrel{n\rightarrow\infty}{\longrightarrow} W_k^{(i)} \qquad \text{a.s. and in } L^2.
\ee
We also note that $W_{k}^{(i)}$ are independent random variables for $k\geq 0,i\geq 1$ .

Using now the estimates~\eqref{aveclesconstantes} established in the proof of Theorem~\ref{thm:main}, there exists a constant $C_R$ (which depends neither of $k$, $i$ or $n$) such that
\[
\left\| W_{k+1,n}^{(i)} -W_{k,n}^{(i)} \right\|_2 \leq C_R \sum_{(n-k)/2\leq i \leq n-k }\frac{1}{\sqrt{m_{k,k+i}}}+ \left(1-c\right)^{(n-k)/2}.
\]
Since $(a_n)$ is non-decreasing, $m_{k,k+i} \geq m_i$ and $m_i$ grows geometrically. Thus,
\[
C_k := \sum_{n\geq k} \left\| W_{k+1,n}^{(i)} -W_{k,n}^{(i)}\right\|_2
\]
is bounded for $k\geq 0$. Then, by Cauchy Schwarz inequality,
$$\sup_{k} \E\left(\sum_{n\geq k} \vert W_{k+1,n}^{(i)} -W_{k,n}^{(i)} \vert\right)<\infty$$
and
\begin{equation}\label{eq:cte}
C := \sup_{k\geq 0} \E(\sup_{n\geq k} W_{k,n}^{(i)})<\infty.
\end{equation}

We can now prove that
\[
\frac{\bZ_n}{m_n} \stackrel{n\rightarrow\infty}{\longrightarrow} S := \sum_{k=0}^{+\infty} \sum_{i=1}^{I_k} \ag_k W_k^{(i)}.
\]

From the triangular inequality and $(\ref{etoile})$, for $1 \leq K \leq n$,
\begin{align*}
\left| \frac{\bZ_n}{m_n} - S\right| \leq &\sum_{k=0}^{K-1} \sum_{i=1}^{I_k} \left|\frac{m_{k,n}}{m_n} W_{k,n}^{(i)} - \ag_k W_k^{(i)}\right| \\
&+ \sum_{k=K}^n \sum_{i=1}^{I_k} \frac{m_{k,n}}{m_n} W_{k,n}^{(i)} + \sum_{k=K}^{+\infty} \sum_{i=1}^{I_k} \ag_k W_k^{(i)}.
\end{align*}
The first term goes a.s. \ to $0$ from the definitions of $\ag_k$ given in (\ref{cvdeux}) and $W_k^{(i)}$ in (\ref{defWi}). The last term is the rest of a convergent serie that goes to $0$ with $K$. This can be seen taking its expectation and combining \eqref{as:immigration} and \eqref{vitesse}. 
Then,
\[
\limsup_{n\to+\infty} \left|\frac{Z_n}{m_n} - S\right| \leq \mathcal{R},
\]
where $\mathcal{R}$ is the limit of the non-negative and non-increasing sequence
\begin{equation}
\mathcal{R}_K :=\sup_{n\geq K} \sum_{k=K}^n \sum_{i=1}^{I_k}\frac{m_{k,n}}{m_n}W_{k,n}^{(i)}.
\end{equation}

Thus, it remains to show that $\mathcal{R} = 0$. But,
\begin{align*}
\E[\mathcal{R}_K] &\leq \E\left[\sum_{k \geq K} \sum_{i=1}^{I_k} \frac{1}{m_k(0,0)} \sup_{n\geq k} W_{k,n}^{(i)}\right] \\
&\leq \sum_{k \geq K} \frac{\E[I_k]}{m_k(0,0)} \E[\sup_{n\geq k} W_{k,n}^{(i)}] \\
&\leq C \sum_{k \geq K} \frac{\E[I_k]}{m_k(0,0)},
\end{align*}
where constant $C$ is defined above in Equation~\eqref{eq:cte}.
Finally, since $m_k(0,0)$ goes to infinity faster than $\varrho^k$ for every $\varrho \in (1, m+1)$, Assumption~\eqref{as:immigration} on $I_k$ ensures that the rest of the serie goes to $0$ and $\mathcal R = 0$ a.s. Then $\bZ_n/m_n$ converges a.s. to a finite random variable $\bW$ which is non-negative (see Appendix~\ref{sec:nonextinction}).
\end{proof}

\begin{proof}[Proof of Theorem \ref{thm:immigration} $(ii)$]
 Following the  proof of  Theorem~\ref{thm:main} $(ii)$, we only prove that $\bZ_{n+1}(0) / \bZ_n$ goes to $m$ as $n$ goes to $\infty$.
First we recall from \eqref{eq:reproduction} that
\[
\frac{\bZ_{n+1}(0)}{\bZ_n} = \frac{1}{\bZ_n} \sum_{j = 1}^{\bZ_n} R_{j,n+1} + \frac{I_n}{\bZ_n}.
\]
From $(i)$, the process $(\bZ_n : n \in \N) $ satisfies the assumptions of the law of large numbers in Lemma~\ref{lem:lgn} and $\frac{1}{\bZ_n} \sum_{j = 1}^{\bZ_n} R_{j,n+1}$ goes to $m$ a.s. \\
It remains to show that $I_n / m_n$ goes to $0$. To do so, we use Borel-Cantelli Lemma. Indeed, for any $\eg > 0$,
\[
\P(I_n / m_n \geq \eg) \leq \frac{\E(I_n)}{\eg m_n}.
\]
Moreover Assumption~\eqref{as:immigration} ensures that there exists $\ell < m + 1$ such that for $n$ large enough, $\E[I_n] \leq \ell^n$. On the other side, from mean behavior Lemma~\ref{lem:mean_behavior}, for $\varrho \in (\ell, m+1)$ and $n$ large enough, there exists a positive constant $\alpha$ such that $m_n \geq \ag \varrho^n$. Thus, $\sum_n \P(I_n / m_n \geq \eg) < \infty$ and $I_n / m_n$ goes to $0$ a.s. This ends up the proof.
\end{proof}

\subsubsection{Proof of Corollary \ref{thm:newborn}}

The proof is in the same vein as the previous one and we just give the main lines.
We note that
\Bea
\frac{\bbZ_n(0)}{m_n} &=& \frac{\bZ_{0}(0)}{m_n} +\sum_{k=0}^{n-1} \frac{\bZ_{n-k}(0)}{m_n} \\
&=&\frac{\bZ_{0}(0)}{m_n}+ \sum_{k=0}^{n-1} \frac{\bZ_{n-k}(0)}{m_{n-k-1}}\frac{m_{n-k-1}}{m_n}.
\Eea
From previous Theorem~\ref{thm:immigration} $(ii$) and writing $S$ the limit of $\bZ_{n}/m_n$, for each $k\geq 0$,
$$\lim_{n\rightarrow \infty} \frac{\bZ_{n-k}(0)}{m_{n-k-1}} = \lim_{n\rightarrow \infty} \frac{\bZ_{n-k}(0)}{\bZ_{n-k-1}}\frac{\bZ_{n-k-1}}{m_{n-k-1}}=mS.$$
Recalling from Lemma~\ref{lem:mean_behavior} $(ii)$ that $m_{n-k-1}/m_n\rightarrow (m+1)^{-k-1}$ as $n\rightarrow \infty$, we obtain, for each $K > 0$,
$$\lim_{n\rightarrow \infty} \sum_{k=0}^K \frac{\bZ_{n-k}(0)}{m_n}=S m\sum_{k=0}^K (m+1)^{-k-1} = (1-1/(m+1)^{K+1}) S.$$
Then, letting $M:=\sup_{n\geq 0} \frac{\bZ_n}{m_n}$,
\Bea
\sum_{k=K}^{n-1} \frac{\bZ_{n-k}(0)}{m_n} &\leq& M \sum_{k=K}^n \frac{m_{n-k}}{m_n} \leq 
M \sum_{k=K}^n \frac{1}{m_k(0,0)}
\Eea
using Lemma \ref{lem:mean_behavior} $(i)$. Using $M<\infty$ a.s. from the previous Theorem and $m_k(0,0) = m m_{k-1}\geq m \alpha \varrho^{k}$ from Lemma \ref{lem:mean_branching} $(i)$ and Lemma \ref{lem:mean_behavior} $(iii)$,
 we get
$$\lim_{K \rightarrow \infty} \sup_{n\geq K} \sum_{k=K}^{n-1} \frac{\bZ_{n-k}(0)}{m_n} =0.$$
 This proves the a.s. convergence.

\section{From the motion of one single bus to aging branching process}\label{sec:bus_aging}
Links between polling systems and multitype branching processes with immigration have been successfully used for a finite number of queues, see e.g.~\cite{MeyResing,VdM, VD, Dyak, Vat}. We specify in this section the link between the bus line from Section~\ref{sec:introqueueing} and the aging branching process from Section~\ref{sec:introaging}. To simplify notations, we get rid of the dependance of the bus number and will denote respectively $M_n$ and $F_n$ the labels of the first and the last customers getting on the bus at the station $n$. Then, $H_n$ and $P_t$ are the time the bus leaves the station $n$ and its position at time $t$. \\

We write $R(t)$ the number of customers getting on the bus in a station when customers have been queuing during $t$ units of time before the bus arrival. The number of customers waiting at the station when the bus arrives 
is distributed as a Poisson distribution with parameter $\ag t$.
During the time these customers get on the bus, new customers may arrive and while these customers get on the bus, other customers arrive\ldots  We get a subcritical Galton-Watson Process whose reproduction law is a Poisson law with parameter $\ag$. Since $\ag < 1$, it will become extinct a.s. in finite time. Then, the number $R(t)$ of customers the bus takes at this station is a finite random variable, which is distributed as the total size of a subcritical branching process whose initial distribution 
 is a Poisson r.v. with parameter $\ag t$.
 First and second moments of this r.v. $R(t)$ can be found  in Appendix~\ref{sec:queues}. Here, a key role is played by the mean number $m$ of customers getting on the bus due to the time the bus has waited in a station while one customer was getting on:
$$m = \E[R(1)] = \frac{\ag}{1-\alpha}.$$

To construct the aging branching process with immigration associated with the bus queuing defined in \eqref{eq:ind}, we define the number of newborns
$\bZ_n(0)$ in generation $n$ as the number of customers getting on the bus 
at the station $n$:
\[
\bZ_n(0)=M_n-F_n+1 \qquad (n\geq 1).
\]
Letting $\bZ_0(0) = 0$, we observe that~\eqref{eq:ind} ensures that
\begin{equation}\label{eq:hn}
H_n = n \tau + \sum_{i=0}^n \bZ_i(0).
\end{equation}
\noindent Then, we introduce the maximal age 
\[
a_0=0, \quad a_n=d_{n+1}-2 \ \  (n\geq 1)  
\]
and we set, for any $a\leq a_n$,
\[
\bZ_n(a) = \bZ_{n-a}(0)\]
while $\bZ_n(a)=0$ if $a> a_n$. Observe also that $a_1=1$. Moreover,
\[
\bZ_{n} = \sum_{i=n-a_n}^{n} \bZ_i(0) = \sum_{i=0}^{a_n} \bZ_{n}(i),
\]

\noindent
The following result  shows that $\bZ_n$ is an aging branching process with immigration.
\begin{proposition}\label{bus_gw}
For every $n\geq 1$,
\[
\bZ_n(0) = \sum_{j = 1}^{\bZ_{n-1}} R_{j,n} + I_n,
\]
where $(R_{j,n},I_n\ : \ j, n \in \N)$ are independent r.v., $R_{j,n}$ are distributed as $R(1)$ and $I_n$ is distributed as $R(d_n \tau)$. 
\end{proposition}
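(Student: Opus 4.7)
The plan is to identify $\bZ_n(0)$ as the total progeny of a subcritical Galton--Watson cascade driven by the Poisson arrival process at station $n$ over a well-defined waiting window, and then to apply thinning and superposition of $\{T_k^{(n)}\}$ to split that progeny into an immigration piece $I_n$ and a sum of $\bZ_{n-1}$ i.i.d.\ copies of $R(1)$.

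First, I would compute the length $T_n$ of the accumulation window. By the definition of $F_n$ the customers boarding at station $n$ are precisely those with $T_k^{(n)}\in(H_{n-d_n-1}+\tg,\,H_{n-1}+\tg]$, together with the cascade they trigger during the subsequent boarding. Iterating $H_k-H_{k-1}=\tg+\bZ_k(0)$ yields
\[
T_n:=H_{n-1}-H_{n-d_n-1}=d_n\tg+\sum_{k=n-d_n}^{n-1}\bZ_k(0)=d_n\tg+\bZ_{n-1},
\]
where the last equality uses $d_n=a_{n-1}+1$, the translation of $a_n=d_{n+1}-1$, so that the index range $\{n-d_n,\dots,n-1\}$ coincides with the one defining $\bZ_{n-1}$. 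The deterministic travel piece $d_n\tg$ matches the $a_n\tg$ appearing in the statement after the usual bookkeeping between ages in the branching process and distances in the bus line.

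Second, I would invoke the cascade description of $R(t)$ recalled just before the proposition and in Appendix~\ref{sec:queues}: conditionally on $T_n=t$, the Poisson($\ag t$) arrivals in the window form the root layer, each such customer boards during a unit time during which a fresh independent Poisson($\ag$) batch of new arrivals joins the queue, and the subcritical cascade terminates in finite time. This is by definition $R(t)$, so $\bZ_n(0)=(M_n-F_n+1)^+\stackrel{d}{=}R(T_n)$. Since $T_n$ is $\FF_{n-1}$-measurable and $R(T_n)$ uses only atoms of $\{T_k^{(n)}\}$ strictly to the right of $H_{n-d_n-1}+\tg$, this cascade is independent of $\FF_{n-1}:=\sigma(\bZ_0,\dots,\bZ_{n-1})$ conditionally on $T_n$. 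Third, I would apply Poisson splitting. The root layer, of mean $\ag T_n=\ag d_n\tg+\ag\bZ_{n-1}$, decomposes as the independent superposition of two Poissons of means $\ag d_n\tg$ and $\ag\bZ_{n-1}$, corresponding respectively to the travel portion and the boarding portion of the window. Growing an independent cascade from each yields $I_n\sim R(d_n\tg)$, and by further splitting the Poisson($\ag\bZ_{n-1}$) into $\bZ_{n-1}$ independent unit Poisson($\ag$) batches (one per unit-long boarding slot at a preceding station) a sum $\sum_{j=1}^{\bZ_{n-1}}R_{j,n}$ with i.i.d.\ $R_{j,n}\sim R(1)$. Since the two cascades are fed by disjoint Poisson atoms, $I_n$, $(R_{j,n})_j$ and $\FF_{n-1}$ are mutually independent, which is the statement of the proposition.

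The main obstacle I anticipate is producing a genuine joint construction in which the $R_{j,n}$ are simultaneously i.i.d., independent of $I_n$, and independent of $\FF_{n-1}$, rather than merely exhibiting matching marginal distributions. This is handled by allocating in advance a fresh unit subinterval of the accumulation window to each of the $\bZ_{n-1}$ preceding customers, attaching independent subcritical cascades to the corresponding pieces of the Poisson process $\{T_k^{(n)}\}$, and then reading the identity $R(s+t)\stackrel{d}{=}R(s)+R'(t)$ for independent copies as a regrouping of disjoint atoms; once this explicit coupling is in place, the proposition is immediate.
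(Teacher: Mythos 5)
Your proposal is correct and takes essentially the same route as the paper: both arguments split the accumulation window for station $n$ into the travel portions (total length $d_n\tau$, producing the immigration term $I_n\sim R(d_n\tau)$) and the boarding portions at the preceding stations (total length $\sum_{k=n-d_n}^{n-1}\bZ_k(0)=\bZ_{n-1}$, producing $\sum_{j=1}^{\bZ_{n-1}} R_{j,n}$ with $R_{j,n}$ i.i.d.\ $\sim R(1)$), using independence of the Poisson process over disjoint time intervals. The paper's proof is phrased more combinatorially (enumerating which customers arrived when), while you phrase it via Poisson superposition and an explicit coupling; your extra care about the joint independence of $I_n$, the $(R_{j,n})_j$, and $\FF_{n-1}$, and about allocating disjoint Poisson atoms to each cascade, is a genuine improvement over the paper's terser treatment. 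One small caveat you should state explicitly: the subinterval of the accumulation window allocated to a preceding customer only produces the \emph{root layer} of that customer's $R(1)$-cascade; the higher generations live after time $H_{n-1}+\tau$, interleaved among all cascades, and independence of those pieces comes from the FIFO boarding structure giving each customer a private unit slot, as in the standard M/G/1 busy-period argument — this is implicit in both your argument and the paper's.
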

\noindent Thus, $H_n - n \tau$ is the cumulated sum of the number of newborn until time $n$ for an aging branching process with maximal age $(a_{n})$, with reproduction law $R(1)$ and with 
immigration in generation $n$ distributed as $R(d_n \tau)$.

\begin{proof}[Proof of Proposition~\ref{bus_gw}]
The number customers who get on the bus at the station $n$ can be split as
$$\bZ_n(0)=N_n+I_n$$
where \\
\noindent $\bullet$ 
$N_n$ is the size of the population of customers   in station $n$  who arrived when  the bus was staying in the stations $i = n-d_n+1, \ldots, n-1$ and
their descendants, i.e. customers who arrived at the station 
$n$ while these customers are getting on the bus and so on... \\
\noindent $\bullet$  $I_n$ is the size of the population of  customers who arrived at the station $n$ during the time intervals when the bus was traveling between two stations $i$ and $i+1$, where $i=n-d_n,\ldots,n-1$, and their descendants.

Let us first characterize $N_n$.
The cumulative time the bus spent among the stations $i = n-d_n+1, \ldots, n-1$ is equal to the number of customers who got on the bus in one of these stations, so it is equal to
\[
\sum_{k=n-d_n+1}^{n-1} \bZ_k(0) = \bZ_{n-1}.
\]
We write $R_{j,n}$  the number of customers getting on the bus in station $n$  whose arrival is  associated to the $j$th unit time interval spent by the bus  among the stations  $i = n-d_n+1, \ldots, n-1$. Then the r.v.
$R_{j,n}$ are  distributed as $R(1)$ and  conditionally on $\bZ_{n-1}$,   $(R_{j,n} :  j=1, \ldots   ,\bZ_{n-1})$  is  independent of $(R_{j,k} : j\geq 0, k<n)$ since arrivals are  independent in each station.
Moreover,
\[
N_n= \sum_{j=1}^{\bZ_{n-1}} R_{j,n}, \qquad N_n\stackrel{d}{=}R(\bZ_{n-1})
\] 

We give now the distribution of  $I_n$. The cumulative time spent by the bus between the stations $i= n-d_n, \ldots, n$ is equal to $d_n\tau$.
Then the  number of customers $I_n$ who are  getting on the bus in the station $n$ and who are associated to  this cumulative time $d_n\tau$
is  distributed as  $R(d_n \tau)$. Finally, $I_n$  is independent of $(R_{k,n} : k,n \geq 0)$ using  the Poisson structure of arrivals in station $n$. 

Summing the sizes of these two subpopulations  gives the result.
\end{proof}

\section{Queuing system for a bus line}\label{sec:queueing}
We combine now the results of the two previous sections to study the time departures $(H_n)_n$ and motion
 $(P_t)_t$ of one single bus and  then of  two buses on the same line. Recall the connection $a_n=d_{n+1}-2$ between the maximal age and the queuing discipline.
\subsection{The motion of one single bus}\label{sec:buses_one}
We  prove the first statement given in Introduction using aging branching process with immigration. In the sequel, $d\geq 2$ is an integer  and we denote $\mathfrak{a}=d-2\geq 0$. 
\begin{theorem}\label{thm:1bus}
\noindent $(i)$ If $d_n = d \wedge n$ and $\varrho(d-2)<1$, then there exists $c>\tau$ such that
\[
H_n \sim_{n\rightarrow \infty} cn \quad a.s.
\]
Moreover, $(H_n-nc)/\sqrt{n}$ converges weakly to a Gaussian r.v. with positive variance, as $n\rightarrow \infty$.

\noindent $(ii)$ If $d_n = d \wedge n$ and $\varrho(d-2)>1$ or $d_n\rightarrow \infty$, then there exists a finite positive random variable $W$ \st
\[
H_n \sim_{n\rightarrow \infty} m_n W \quad a.s.
\]
\end{theorem}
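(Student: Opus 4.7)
The plan is to exploit the identity
$$H_n = n\tau + \mathfrak{Z}_n(0) = n\tau + \sum_{i=0}^{n} \bZ_i(0),$$
established in Section~\ref{sec:buses_one} via \eqref{ind}, which reduces the asymptotics of $H_n$ to those of the cumulated number of newborns in the aging branching process with immigration defined by Proposition~\ref{bus_gw}. The two regimes then correspond to the two asymptotic behaviors of this cumulated sum.

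\textbf{Part (ii).} When $a_n\to\infty$, Corollary~\ref{thm:newborns} applies directly and gives that $\mathfrak{Z}_n(0)/m_n$ converges a.s.\ to a limit which inherits positivity from $\bW$ via Theorem~\ref{thm:immigration}. Because Lemma~\ref{lem:mean_behavior}(ii) forces $m_n$ to grow at least geometrically, we have $n\tau/m_n\to 0$, whence $H_n/m_n\to W$ a.s. When $a_n=a$ is constant and $\rho(a)>1$, the process $(\bZ_n(0),\ldots,\bZ_n(a))_n$ is a classical supercritical multitype Galton-Watson process with immigration in dimension $a+1$; Perron-Frobenius theory and the standard martingale approach give $\bZ_n/\rho(a)^n\to W$ a.s.\ with $W>0$, and summing along generations yields the same conclusion $\mathfrak{Z}_n(0)/m_n\to W$, with $m_n\sim c\rho(a)^n$ swallowing the $n\tau$ term.

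\textbf{Part (i).} When $a_n=a$ is constant and $\rho(a)<1$, the vector $(\bZ_n(0),\ldots,\bZ_n(a))_n$ is a Markov chain on $\N^{a+1}$ corresponding to a subcritical multitype branching process with immigration. Subcriticality combined with the fact that $\P(I_n=0)>0$ (giving a small set at the origin) and $\E[I_n^2]<\infty$ (computed in Appendix~\ref{sec:queues}) provides a standard Foster-Lyapunov drift condition; this yields positive Harris recurrence and geometric ergodicity toward a unique stationary law $\pi$. The ergodic theorem then gives
$$\frac{H_n-n\tau}{n}=\frac{1}{n}\sum_{i=0}^n \bZ_i(0)\xrightarrow[n\to\infty]{\text{a.s.}} c':=\E_\pi[\bZ_0(0)],$$
and $c:=\tau+c'>\tau$ because $\E[I_n]=a\tau\alpha/(1-\alpha)>0$ forces $c'>0$. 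Finally I invoke the CLT for geometrically ergodic Markov chains (Gordin's martingale approximation, or Kipnis-Varadhan) applied to the centered functional $z\mapsto z(0)-c'$, which has finite variance under $\pi$ thanks to the $L^2$ bounds on $R$ and $I_n$; this delivers $(H_n-nc)/\sqrt{n}\to \mathcal{N}(0,\sigma^2)$ in distribution with $\sigma^2>0$, the non-degeneracy following from the fact that the increments are genuinely random (the immigration is Poisson-driven and non-constant).

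\textbf{Main obstacle.} The delicate step is the CLT in (i): the a.s.\ growth rate $c$ and the geometric ergodicity are routine consequences of classical subcritical multitype branching with immigration theory, but ensuring that the Markov chain CLT applies with strictly positive asymptotic variance requires checking the drift/small-set structure carefully and verifying that the limiting variance formula does not degenerate, which reduces to the immigration distribution not being concentrated at a single value.
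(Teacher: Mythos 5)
Your reduction of $H_n$ to $n\tau+\mathfrak{Z}_n(0)$ and your handling of part~$(ii)$ coincide with the paper: when $a_n\to\infty$ the paper also quotes Corollary~\ref{thm:newborns} together with Proposition~\ref{bus_gw} (after checking assumption~\eqref{assimmigrat}, which you leave implicit but which is the trivial bound $\E[I_n]=a_n\tau\alpha/(1-\alpha)\leq n\tau\alpha/(1-\alpha)$), and for $a_n=a$ constant supercritical both you and the paper fall back on classical MBP-with-immigration theory. For part~$(i)$, however, you take a genuinely different route. The paper simply invokes Theorems~1.2 and~1.3 of Roitershtein~\cite{Roit}, specialized to constant environment, which directly give the strong LLN $n^{-1}\sum_{k\le n}\bZ_k\to b=\sum_{k\ge 0}\E(M^kI)$ and the corresponding CLT, and then reads off the first coordinate $\mathfrak{Z}_n(0)$. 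You instead view $(\bZ_n(0),\ldots,\bZ_n(a))$ as a Markov chain on $\N^{a+1}$, build a Foster--Lyapunov drift from subcriticality (with the accessible atom $\{0\}$ reached since $\P(I=0)=e^{-\alpha a\tau}>0$), deduce geometric ergodicity, and apply the ergodic theorem and a Markov-chain CLT (Gordin / Kipnis--Varadhan). This is a legitimate alternative, more self-contained and not relying on a specialized reference, but it defers more work: you need to check $\psi$-irreducibility and aperiodicity against the rigid shift structure $Z_{n+1}(k)=Z_n(k-1)$, and — the real soft spot you yourself flag — the strict positivity of the asymptotic variance. Your justification ("the immigration is Poisson-driven and non-constant") is informal; what one actually needs is that $z\mapsto z(0)-c'$ is not a coboundary $g-Pg$ under $\pi$, which requires a short explicit argument. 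The paper sidesteps all of this by importing a theorem whose hypotheses it checks in one line, whereas your route proves more from scratch but, as written, leaves that non-degeneracy step as a gap rather than a lemma.
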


\begin{proof}
For $(i)$, let $d_n=d\wedge n$ and $\varrho(d-2)<1$. The process defined for $n \in \N$ by $\mathscr{Z}_n = (\bZ_n(a) :  0 \leq a \leq \mathfrak{a})$ is a multitype
 branching process (with a finite number of types) and immigration $I_n$ in generation $n$ distributed as $R((d\wedge n) \tau)$. Let $M$ be the reproduction matrix associated to this
  process (see Appendix~\ref{sec:mean_asymptotics}) and $\bar{I} =(\E(R(d\tau)), 0,...,0)^*$. Since $\varrho(\mathfrak{a}) < 1$, Theorems 1.2 and 1.3 respectively in \cite{Roit} ensure that,
\[
\frac{\sum_{k=0}^{n} \mathscr{Z}_k}{n} \stackrel{n\rightarrow \infty}{\longrightarrow} b:=\sum_{k=0}^{\infty} M^k \bar{I} \qquad a.s.
\]
and $(\sum_{k=0}^n \mathscr{Z}_k - n b)/\sqrt{n}$ converges in distribution to a Gaussian r.v. Then we use this limit for the first coordinate of $\sum_{k=0}^n \mathscr{Z}_k$, which is $\sum_{k=0}^n \bZ_k(0)$ that goes to $b_1$. We get that $H_n/n$ goes a.s. to $\tau+b_1$ and the central limit theorem, thanks to its description~\eqref{eq:hn}. \\
The point $(ii)$ for $a_n\rightarrow \infty$ is a direct consequence of Corollary~\ref{thm:newborn}. Indeed, using the notation from this corollary for the aging branching process $\bZ$ and \eqref{eq:hn},
\[
H_n = n \tau + \bbZ_n.
\]
Moreover, from Proposition~\ref{bus_gw}, the immigration term $I_n$ is distributed as $R(d_n\tau)$, so $$\limsup_{n\rightarrow \infty} \frac{1}{n} \log(\E(I_n))=\limsup_{n\rightarrow \infty} \frac{1}{n} \log(d_n\tau m)=0<m+1,$$ since $d_n$ is bounded by $n$. Finally, since $d_n \geq 2$,
\[
\P(I_n \geq x) = \P(R(d_n\tau) \geq x) \geq \P(R(\tau) \geq x).
\]
We can now apply Corollary ~\ref{thm:newborn} to complete the proof. \\
The point $(ii)$ when $d_n=d\wedge n$ and $\varrho(d-2)>1$  can be handled similarly and is simpler since
 $\mathscr{Z}_n = (\bZ_n(a), 0 \leq a \leq \mathfrak{a})$ is a multitype branching process  (with a finite number of types). The counterpart Theorem \ref{thm:main}  (i-ii-iii) is actually 
 the classical Kesten Stigum theorem in the supercritical regime, see Theorem 1, Chapter V - Section 6 in \cite{AN}; while 
  the proofs of Theorem \ref{thm:immigration} and   Corollary~\ref{thm:newborn}  in that finite dimension cases  are the same. That  completes the proof.
\end{proof}

\begin{remark}
We stress that \cite{Roit} deals with multitype branching process in random environment with immigration and we just apply here  Theorems 1.2 and 1.3 in the constant environment case, where the assumptions can be easily verified since  we have $L^2$ moment assumptions. The results of next Corollary~\ref{corposit} could be extended to the case of $a_n$ random, which follows a stationary ergodic sequence. Let us note that in that case $b$ can have infinite components and new regimes appear, which could be of interest for a future work. 
\end{remark}

\begin{corollary} \label{corposit} $(i)$ If $d_n = d \wedge n$ and $\varrho(d-2)<1$, then $$
P_t\sim_{t\rightarrow \infty} \frac{t}{c} \qquad a.s.$$

\noindent $(ii)$ If $d_n=d\wedge n$ and $\varrho(d-2)>1$ or $d_n\rightarrow \infty$, then 
$$\limsup_{t\rightarrow \infty} \vert P_t - u_t \vert <\infty \quad a.s.,$$
where $u_t=\min( n\in \N : m_n\geq t)$ is the inverse function of $m_n$.
\end{corollary}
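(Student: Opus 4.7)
The plan is to invert the asymptotics of $H_n$ supplied by Theorem~\ref{thm:1bus}, using the defining relation $P_t=\inf\{i\,:\,H_i\geq t\}$ and the fact that $(H_n)$ is non-decreasing.

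For $(i)$, Theorem~\ref{thm:1bus}~$(i)$ gives $H_n/n\to c$ a.s.\ with $c>\tau>0$, so $H_n\to\infty$ and hence $P_t\to\infty$ a.s.\ as $t\to\infty$. By definition $H_{P_t-1}<t\leq H_{P_t}$, so
\[
\frac{H_{P_t-1}}{P_t}<\frac{t}{P_t}\leq\frac{H_{P_t}}{P_t}.
\]
Both extremes converge a.s.\ to $c$ as $t\to\infty$, which gives $P_t/t\to 1/c$ as claimed.

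For $(ii)$, I would argue $\omega$ by $\omega$ on the probability-one event $\Omega_0$ where $H_n/m_n\to W\in(0,\infty)$ (Theorem~\ref{thm:1bus}~$(ii)$ combined with the positivity of $W$). Fix $\omega\in\Omega_0$ and $\varepsilon\in(0,W(\omega))$. For $n\geq N=N(\omega,\varepsilon)$,
\[
(W-\varepsilon)\,m_n\;\leq\;H_n\;\leq\;(W+\varepsilon)\,m_n.
\]
For $t$ large enough that $P_t\geq N$, these sandwich bounds together with the definitions of $P_t$ and $u_t$ yield
\[
u_{t/(W+\varepsilon)}\;\leq\;P_t\;\leq\;u_{t/(W-\varepsilon)}.
\]
For example, if $m_n<t/(W+\varepsilon)$ then $H_n\leq(W+\varepsilon)m_n<t$, so $P_t>n$, whence $P_t\geq u_{t/(W+\varepsilon)}$; the other inequality is symmetric.

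The heart of the proof, which I expect to be the only nontrivial step, is bounding $u_{\lambda t}-u_t$ for a fixed $\lambda>0$. Here Lemma~\ref{lem:mean_behavior}~$(i)$ is decisive: since $m_{n+1}/m_n\to m+1>1$, we have $m_{n+k}/m_n\to(m+1)^k$ for every fixed $k$. Choose $k_0$ with $(m+1)^{k_0}>\max\{W+\varepsilon,\,(W-\varepsilon)^{-1}\}$; then for $n$ large, $m_{n+k_0}\geq(W+\varepsilon)m_n$ and $m_{n+k_0}\geq(W-\varepsilon)^{-1}m_n$. A direct comparison from the definition of $u_{\cdot}$ then gives $u_{t/(W-\varepsilon)}\leq u_t+k_0$ and $u_{t/(W+\varepsilon)}\geq u_t-k_0$ for $t$ large, so $|P_t-u_t|\leq k_0$ eventually. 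Therefore $\limsup_{t\to\infty}|P_t-u_t|\leq k_0<\infty$ a.s.
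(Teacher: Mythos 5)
Your proof follows essentially the same route as the paper's: for $(i)$, invert $H_n/n\to c$ using the definition of $P_t$; for $(ii)$, invert $H_n\sim Wm_n$ a.s.\ and control the resulting discrepancy between the two inverse functions using that $m_n$ grows at least geometrically. The paper packages the inversion step slightly differently — it first reduces to a deterministic lemma (if $v_n\sim c\,m_n$, then $w_t-u_t$ stays bounded, where $w$ is the inverse of $v$), and then bounds $w_t-u_t$ directly from $1\geq v_{w_t-1}/m_{u_t}\geq C\,\ell^{\,w_t-1-u_t}$ with $\ell>1$. Your sandwich $u_{t/(W+\varepsilon)}\leq P_t\leq u_{t/(W-\varepsilon)}$ plus the $k_0$-shift bound is an equally valid and perhaps more explicit way of saying the same thing.

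One small but real gap: you invoke Lemma~\ref{lem:mean_behavior}~$(i)$, which states $m_{n+1}/m_n\to m+1$, but that lemma is proved in a section where the standing hypothesis is $a_n\to\infty$. It does \emph{not} apply to the case $a_n=a$ constant with $\rho(a)>1$; there one has $m_{n+1}/m_n\to\rho(a)\neq m+1$ by Perron--Frobenius. Your argument survives unchanged because all you actually use is that $\liminf_{n}m_{n+1}/m_n>1$ — so that some $\ell>1$ satisfies $m_{n+1}/m_n\geq\ell$ eventually, and you can take $k_0$ with $\ell^{k_0}>\max\{W+\varepsilon,(W-\varepsilon)^{-1}\}$. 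This is exactly the form the paper states and uses: ``$\rho(a)>1$ or $a_n\to\infty$ ensures that $\liminf_{n\to\infty}m_{n+1}/m_n>1$.'' You should cite that fact in that form rather than Lemma~\ref{lem:mean_behavior}~$(i)$, so that the constant-$a$ branch is genuinely covered.
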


\begin{remark} \label{Rque} In the case $(ii)$, let us note that $m_n$ tends to infinite, so $u_t$ is well defined (finite) and goes to infinity. If $d_n=d\wedge n$ and $\varrho(d-2)>1$, then $u_t\sim \log(t)/\log(\varrho(d-2))$. If $\liminf_{n\rightarrow \infty} d_n / \log n > 1 / \log(m+1)$, then $m_n$ is of the same magnitude as $(m+1)^n$, so $u_t\sim \log(t)/\log(m+1)$ when $t\rightarrow \infty$.
\end{remark}

\begin{proof} The point $(i)$ is a direct consequence of Theorem \ref{thm:1bus} $(i)$.

For the point $(ii)$, we first use Theorem~\ref{thm:1bus} $(ii)$ to get that
$$H_n \sim_{n\rightarrow \infty} m_n W \qquad \text{a.s.}$$
with $W\in(0,\infty)$ a.s. \ We recall from Lemma~\ref{lem:mean_behavior} $(ii)$ that
$$\liminf_{n\to+\infty} \frac{m_{n+1}}{m_n} > 1.$$

To conclude, we prove now that for any sequences $v_n$ and $\w{v}_n$, if there exists $c>0$ such that 
$$v_n \sim c \w{v}_n \quad (n\rightarrow \infty), \qquad  \lim v_n = +\infty, \qquad \liminf \w{v}_{n+1} / \w{v}_n > 1,$$
 then $|\w{u}_t - u_t|$ is bounded where $u_t$ (resp. $\w{u}_t$) is the inverse function of $v_n$ (resp. $\w{v}_n$). Indeed, there exists $\ell > 1$, $c_1,c_2>0$ such that for $n$ large enough,
\[
\frac{\w{v}_{n+1}}{\w{v}_n} > \ell,  \quad \frac{v_{n+1}}{v_n} > \ell, \quad c_1 \w{v}_n \leq v_n \leq c_2 \w{v}_n.
\]
Then, for $t$ large enough, 
$v_{u_t-1}/\w{v}_{\w{u}_t}\geq  c_1 \w{v}_{u_t-1}/\w{v}_{\w{u}_t}$ and
 if additionally $u_t   \geq \w{u}_t+1$, 
\[
\frac{v_{u_t-1}}{\w{v}_{\w{u}_t}} \geq c_1 \ell^{u_t - 1 - \w{u}_t}.
\]
Furthermore, from the definition of the inverse functions,
$v_{u_t-1}/\w{v}_{\w{u}_t} \leq t/t = 1$, so
\[
1 \leq u_t - \w{u}_t \leq -\frac{\log c_1}{\log \ell} + 1.
\]
which end up the proof, since  the case
 $u_t - 1 \leq \w{u}_t$ is handled by symmetry.
\end{proof}

\subsection{Coalescence criteria for two buses with the same discipline}\label{sec:buses_two}

We can now determine when the two buses defined in the introduction coalesce a.s. The proof relies on a coupling argument, which enables  to compare these buses to two independent buses following the same discipline, whose behavior are described in the previous part.

\begin{theorem}\label{thm:coalescence}
$(i)$ If $d_n = d \wedge n$ and $\varrho(d-2)<1$, then  $\P(N<\infty)=1$.\\
$(ii)$ If $d_n = d \wedge n$ and $\varrho(d-2) > 1$, then $\P(N=\infty)>0$.\\
$(iii)$ If $d_n\rightarrow \infty$ as $n \rightarrow \infty$, then $\P(N<\infty)=1$.
\end{theorem}
First notice that the number of customers waiting for the bus $2$ at the successive stations is stochastically less than the number of customers for the bus $1$, since the customers obey the same discipline for the two buses, but some of them may have taken the bus $1$ instead of waiting for the bus $2$. Then, recalling that $\mu$ is the initial time delay between the two buses and  using the Poissonian structure of arrivals, we have for each $n\geq 0$,
\be
\label{coupling}
\{N>n\} \subset \{H_n^{(2)} \leq \tilde{H}_n^{(1)}+\mu \} \quad \text{a.s.},
\ee
where $(\tilde H_n^{(1)} : n \in \N)$ and $(H_n^{(1)} : n \in \N)$ are independent and identically distributed.
Roughly speaking, the proof works now as follows. In the first case $(i)$ the two buses move linearly, but the fluctuations of their positions make them coalesce. This can already be seen on the two independent buses. In the second case, the two buses may keep a gap larger than the discipline, which again can be seen using the positions of two independent buses. An event with an initial gap large enough is introduced for the coupling. Finally, in case $(iii)$, the second bus will be closer to the first bus than an independent bus would be: this ensures that the distance between them is bounded, so the second has much less customers to take than the first one; the two buses eventually coalesce. 

\begin{proof}[Proof of (i)]
 In this regime, we can use the central limit theorem of Theorem \ref{thm:1bus} $(i)$ for $(\tilde H_n^{(1)} : n \in \N)$ and $(H_n^{(1)} : n \in \N)$. So there exists $n_1 \in \N$ such that
\[
\P(\tilde{H}_{n_1}^{(1)} +\mu< n_1c) \geq 1/4 \text{ and } \P(H_{n_1}^{(1)}> n_1c) \geq 1/4.
\]
Thus, the coupling (\ref{coupling}) ensures that
$$\P(N \leq n_1) \geq \P(\tilde{H}_{n_1}^{(1)} +\mu<n_1c,\ H^{(1)}_{n_1}>n_1c) \geq 1/16.$$
We observe that $\mu$ can be taken random in the previous inequality, as soon as it is independent  of $({\bf T}{(i)} : i \geq 1)$. Moreover 
this result also holds if the discipline for the second bus is equal to $d$ for any station $n$, instead of $d\wedge n$, which means  that any customer arriving in the stations $i \in \{0,\ldots,  d\}$ before the arrival of bus $2$ stays in the queue. Indeed, this simply adds some customers for bus $2$ in the first $d$ stations, which yields an additional immigration term (only) in the first $d$ generations of the branching structure, with finite second moment.\\
We can then iterate the procedure and choose $n_2\in \N$ such that
$$\P( N \leq n_2 \vert N \geq n_1) \geq 1/16$$
and then inductively find $n_k\geq n_{k-1}$ for $k\geq 3$ such that
$$\P(N \geq n_k) \leq (1-1/16)^k.$$
Letting $k$ go to infinity ends up the proof. 
\end{proof}

\begin{proof}[Proof of $(ii)$] We show that
\be
\label{distreuiared}
\P( \exists t_0 >0, \forall t \geq t_0 : P_t^{(1)}-P_t^{(2)} > d) > 0,
\ee
so that the two buses can travel with the same discipline $d$ after time $t_0$, independently, as if they were on two different lines. 
We introduce the stopping time
$$
T_{d}:=\min(t \geq H^{(2)}_1, \ P_t^{(1)}-P_t^{(2)} \leq d), 
$$
with $\min \varnothing =\infty$ and we aim at proving that  $\P(T_{d} = \infty) > 0$.
First notice that, before time $T_{d}$, the bus $2$ is independent of the bus $1$ and with the same dynamics. The motion of the bus $2$ is indeed not (yet) accelerated by the fact that customers are taken by the bus $1$ instead of waiting
for the bus $2$. Thus,
$$P^{(2)}_{t- \mu}\stackrel{d}{=}P_t^{(1)}, \quad \text{for } t \in [\mu,T_{d}).$$
By Corollary \ref{corposit} $(ii)$, there exists an integer $\ell$ such that
\be
\label{probb}
 \P\left( \exists t \geq 0, \vert {P}^{(1)}_t-u_t \vert \geq \ell/2\right) \leq 1/2.
\ee
We introduce the bus $\hat{P}^{(1)}$ which takes the customers waiting at the stations $i \geq d+\ell$ with discipline $d_i=d\wedge i$, but those arrived at the station $d+\ell +j$ before time $\tau(d+\ell)$ for any $j\leq d$. It is defined recursively as follows.
For any $i\geq 1$,
\begin{align*}
\hat{H}_0^{(1)} &= 0\ a.s.\\
\hat{F}_i^{(1)} &= \min\left( j \geq 1 : T_j^{(d+\ell+ i)} >\hat{H}_{i-d_i}^{(1)}+\tau(d+\ell)\right) \\
\hat{\mathcal I}_i^{(1)} &= \{k \in \N : \forall j \leq k,\, T_{\hat{F}_i^{(1)} + j}^{(d+\ell+i)} \leq \hat{H}_{i-1}^{(1)} + \tau + j \} \\
\hat{M}_i^{(1)} &= \hat{F}_i^{(1)} -1 + \#\ \hat{\mathcal I}_i^{(1)} \\
\hat{H}_i^{(1)} &= \hat{H}_{i-1}^{(1)} + \tau + (\hat{M}_i^{(1)}-\hat{F}_i^{(1)}+1).
\end{align*}
By independence and stationarity of the Poisson Point Processes,
 $$\hat{P}^{(1)}\stackrel{d}{=} P^{(1)}$$
Let $N$ be a Poisson random variable of parameter $\ag \tau$ which is independent of the process of time arrivals $(\bT^{(i)} : i \geq 1)$ of customers. Without loss of generality, 
we assume that $\tau$ is an integer; otherwise one can replace $\tau$ by an integer larger than $\tau$. Denote by $E_\ell$ the event 
 $$E_{ \ell}= \bigcap_{1\leq i \leq d+\ell} 
 \{ T_1^{(i)} \geq i\tau \} \bigcap_{ d+\ell +1\leq i \leq 2d+\ell} 
 \{ T_1^{(i)} \geq (d+\ell)\tau \} \ 
 \bigcap \  \{ T_1^{(1)} > \mu, \  T_{(d+\ell)\tau+N}^{(1)} \leq \mu+\tau\}.
$$
which is the event when
\begin{itemize}
\item there are no customers at the stations $i=1,2,\ldots, d+ \ell$ when the bus $1$ arrives in these stations;
\item there are no arrivals of customers at the stations $i=d+\ell +1,\ldots,2d+\ell$ before the bus arrives at the station $d+\ell$;
\item the number of customers at the station $1$ arriving during the time interval $[\max(\mu,\tau),\mu+\tau]$ (so between the arrival of buses 1 and 2) 
 is at least $(d+\ell)\tau +N$.
 \end{itemize} 
 We first observe that $E_{\ell}$ has a positive probability and a.s. on $E_{\ell}$, we have
$$H_{d+\ell}^{(1)}=(d+\ell)\tau.$$
Adding that $\hat{P}^{(1)}$ and $E_{\ell}$ are constructed by restriction of the Poisson Point Process of arrivals to two disjoint domains, $\hat{P}^{(1)}$ and $E_{\ell}$ are independent.
Moreover, by construction,
 a.s. on the event $E_{ \ell}$, for any $t\geq 0$,
$$P^{(1)}_{t+(d+\ell)\tau}= \hat{P}^{(1)}_t+d+\ell.$$

\smallskip

Let $\hat{P}^{(2)}$ be a new bus, alone on the line. It starts from the station $0$ at time $\mu$, takes exactly $N$ customers in the station $1$ and then takes the customers waiting at the stations $i \geq 2$ which are given by the Poisson Point Process $(\bT^{(i)} -(d+\ell)\tau : i \geq 2) \cap [0,\infty)$ with discipline $d_i$ (associated to $a_i$). Then, 
$(\hat{P}^{(2)}_{\mu+t})_{t\geq 0}$ 
is distributed as $(P^{(1)}_{t})_{t\geq 0}$. Finally, since the bus $\hat{P}^{(2)}$ has the same customers than $P^{(2)}$ to take at the stations 
$i\geq 2$ before time $T_{d}$ and $(d+\ell)\tau$ customers less to take in the station $1$ on the event $E_{\ell}$, we have 
$$P^{(2)}_{t+(d+\ell)\tau}= \hat{P}^{(2)}_{t} \qquad \text{a.s. on } E_{ \ell}$$
for any $t\in [H^{(2)}_1, T_{d}]$.
\smallskip
Thus
\Bea
&&\{E_{ \ell}, \ \forall t \geq 0, \ \hat{P}^{(1)}_t- \hat{P}^{(2)}_{t} > -\ell\}\\
&& \qquad
=\{E_{ \ell}, \ \forall t \geq (d+\ell)\tau, \ (\hat{P}^{(1)}_{t-(d+\ell)\tau}+d+\ell) - \hat{P}^{(2)}_{t-(d+\ell)\tau} > d\}\\
&& \qquad \quad \subset \{E_{ \ell}, \ \forall t \geq (d+\ell)\tau, \ P^{(1)}_t- P^{(2)}_t > d, T_{d} =\infty\} \subset \{T_{d} =\infty\}.
\Eea
Finally,
\Bea
\P(T_{d}=\infty)& \geq& \P( E_{ \ell}) \P\left( \forall t \geq 0, \ \hat{P}^{(1)}_t-\hat{P}^{(2)}_t >-\ell\right),
\Eea
and 
\Bea
&&\P\left( \forall t \geq 0, \ \hat{P}^{(1)}_t-\hat{P}^{(2)}_t >-\ell\right) \\
&& \qquad \geq 1-\P(\exists t \geq 0 :  \hat{P}^{(1)}_t -u_t \leq -\ell/2 \ \emph{or} \  u_t- \hat{P}^{(2)}_t \leq -\ell/2)\\
&& \qquad \geq 1-\P(\exists t \geq 0 : \vert {P}^{(1)}_t -u_t\vert \geq \ell/2)>0
\Eea
using $(\ref{probb})$. We obtain that $\P(T_{d}=\infty)>0$, which ends up the proof. 
\end{proof}

\begin{proof}[Proof of (iii)]
 We use again the coupling (\ref{coupling}) and Corollary \ref{corposit} $(ii)$ to get
\[
\limsup_{t\rightarrow \infty} \ P_t^{(1)}-P_t^{(2)} \leq \limsup_{t\rightarrow \infty} \ P_t^{(1)}-\tilde {P}_t^{(1)} <\infty \qquad a.s.
\]
Then the non-decreasing sequence of events
$$A_K:=\{\forall t>0 : P_t^{(1)}-P_t^{(2)} \leq K\}$$
satisfies $\P(\cup_{K\in \N} A_K)=1$. Moreover, a coupling for the second bus ensures that
$$\P( A_K, \forall t>0 : P_t^{(1)}>P_t^{(2)})\leq \P( \forall t>0 : P_t^{(1)}>\hat{P}_t^{(2)}),$$
where $\hat{P}^{(2)}$ is independent of $P^{(1)}$ and gives the position of a single bus on the line with discipline $\widehat{d_n}=K\wedge n$. Using again Corollary
 \ref{corposit} $(ii)$ with this discipline $\widehat{d_n}=K\wedge n$, we obtain a logarithm speed $u_t^{(2)}\sim \log(t) / \log(\varrho(K-2))$ for $\hat P^{(2)}$ as $t\rightarrow \infty$, 
 see Remark \ref{Rque}. Adding that
 $P^{(1)}$ goes slowler and more precisely  that $u_t^{(1)}\sim \log(t) / \log(\varrho)$ as $t\rightarrow \infty$ for some $\varrho>\varrho(K-2)$, we get
 $$\lim_{t\rightarrow \infty} \hat{P}_t^{(2)}-P^{(1)}_t=\infty \qquad \text{a.s.}$$
 So $\P( A_K, \forall t>0 : P_t^{(1)}>P_t^{(2)})=0$ and letting $K\rightarrow \infty$,
 $$\P( \forall t>0 : P_t^{(1)}>P_t^{(2)})=0,$$
which ends up the proof. 
\end{proof}

\appendix
\section{Mean behaviors of aging branching process}\label{sec:mean_asymptotics}
Let $\varrho(a)$ be the greatest eigenvalue of the squared Leslie's matrix of size $a+1$ governing a population with maximal age $a$
 $$\begin{pmatrix}
 m & m & \cdots & m \\
 1 & 0 & \cdots & 0 \\
 \vdots & \ddots & \ddots & \vdots \\
 0 & \cdots & 1 & 0
 \end{pmatrix},$$
 i.e. the greatest root of $P_a = X^{a+1} - m \sum_{k = 0}^a X^k$. Thus,
\begin{equation}\label{eq:rhoa}
\varrho(a)^{a+1} = m \sum_{k=0}^a \varrho(a)^k.
\end{equation}
Obviously,  $(\varrho(a))_{a\geq 1}$ is non-decreasing. Let us  specify its asymptotic behavior. When $a$ goes to infinity, individuals become immortal
 and we recover a Galton-Watson process with mean  $m+1$. We give here the second order term.
\begin{lemma}\label{lem:technical}
When $a$ goes to infinity,
\[
\varrho(a) = (m + 1) - m (m + 1)^{-a-1} + O(a (m+1)^{-2a}).
\]
\end{lemma}

\begin{proof}
Notice that $P_a(X) = Q_a(X)/(X - 1)$ where $Q_a(X)=X^{a+1}(X-m-1) + m$. Adding that $Q_a'((a+1)(m+1)/(a+2))=0$ and $Q_a(m+1)>0$ yields
\[
\frac{a+1}{a+2} (m + 1) \leq \varrho(a) \leq m+1.
\]
Thus, $\varrho(a)$ goes to $m + 1$. 

\medskip

Writting $\varrho(a) = (m + 1) (1 - \eg_a)$ and raising the previous inequality at exponent $a+1$ yields 
\[
\left(1 - \frac{1}{a+2}\right)^{a+1} \leq \left(1 - \eg_a\right)^{a+1} \leq 1
\]
and $\big(\left(1 - \eg_a\right)^{a+1} : a\geq 1\big)$ is bounded.
Since $Q_a(\varrho(a)) = 0$, then $(m + 1)^{a+2} (1 - \eg_a)^{a+1} \eg_a = m$. 
First,
\[
(a+1) \eg_a = m \cdot (a+1) \cdot (m+1)^{-a-2} \cdot (1-\eg_a)^{-a-1} \to 0.
\]
Then,
\[
\eg_a (m + 1)^{a + 2} = m e^{-(a + 1) \ln(1 - \eg_a)} \to m.
\]
Going one step further,
\begin{align*}
\eg_a (m + 1)^{a + 2} - m &= m e^{(a+1) \eg_a + o((a+1) \eg_a)} \\
&= m (a + 1) \eg_a + o((a + 1) \eg_a) \  \sim \ m^2 (a + 1) (m + 1)^{-a-2}.
\end{align*}
Finally,
\[
(m + 1) - \varrho(a) - m (m + 1)^{-a-1} \sim m^2 (a + 1) (m + 1)^{-2a-3}
\]
as $a\rightarrow \infty$.
\end{proof}

\begin{lemma}\label{lem:prod_leslie}
$(i)$ If $\sum (m+1)^{-a_i} = +\infty$, then
\[
\prod_{i=1}^n \varrho(a_i) = o((m+1)^n)\quad (n\rightarrow \infty).
\]
$(ii)$ If $\sum (m+1)^{-a_i} < +\infty$, then there exists $\ag > 0$ \st
\[
\prod_{i=1}^n \varrho(a_i) \sim \ag  (m + 1)^n \quad (n\rightarrow \infty).
\]
\end{lemma}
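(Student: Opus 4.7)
The plan is to factor $(m+1)^n$ out of the product and study the remaining normalized product. Set $u_i := 1 - \rho(a_i)/(m+1)$, so that
\[
\prod_{i=1}^n \rho(a_i) = (m+1)^n \prod_{i=1}^n (1 - u_i).
\]
Lemma~\ref{lem:technical} gives the expansion $u_i = m (m+1)^{-a_i-2} + O(a_i (m+1)^{-2a_i-1})$, so there exist constants $0 < c_1 \leq c_2$ depending only on $m$ with $c_1 (m+1)^{-a_i} \leq u_i \leq c_2 (m+1)^{-a_i}$ for $i$ large enough. Moreover, since Lemma~\ref{lem:technical} shows $\rho(a)$ is non-decreasing and $\rho(0) = m$, we have $\rho(a_i) \geq m$, hence $u_i \in [0, 1/(m+1)]$ is bounded away from $1$ uniformly. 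In particular $\sum u_i$ and $\sum (m+1)^{-a_i}$ have the same nature.

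For case $(i)$, I would apply the elementary bound $1 - u \leq e^{-u}$ valid for $u \in [0,1]$, which yields
\[
0 \leq \prod_{i=1}^n (1 - u_i) \leq \exp\Bigl(-\sum_{i=1}^n u_i\Bigr).
\]
The assumption $\sum (m+1)^{-a_i} = +\infty$ and the lower bound $u_i \geq c_1 (m+1)^{-a_i}$ force $\sum u_i = +\infty$, so the right-hand side vanishes and $\prod_{i=1}^n \rho(a_i) = o((m+1)^n)$.

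For case $(ii)$, the assumption $\sum (m+1)^{-a_i} < +\infty$ combined with the upper bound $u_i \leq c_2 (m+1)^{-a_i}$ gives $\sum u_i < \infty$; in particular $u_i \to 0$. Using the Taylor expansion $\log(1 - u) = -u + O(u^2)$, uniform on $[0, 1/(m+1)]$, together with $\sum u_i^2 \leq \tfrac{1}{m+1} \sum u_i < \infty$, I conclude that $\sum_{i \geq 1} \log(1 - u_i)$ converges to some finite $L \in \R$. Setting $\ag := e^L > 0$, this gives $\prod_{i=1}^n (1 - u_i) \to \ag$ and hence $\prod_{i=1}^n \rho(a_i) \sim \ag (m+1)^n$. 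The argument is essentially classical; no step looks problematic, the only mild technical point being to verify that the $O(a_i (m+1)^{-2a_i})$ correction in Lemma~\ref{lem:technical} is negligible relative to the main term $m(m+1)^{-a_i-2}$, which is immediate since $a_i (m+1)^{-a_i} = o(1)$.
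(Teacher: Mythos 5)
Your proof is correct and follows essentially the same route as the paper: both of you normalize by $(m+1)^n$, take logarithms, and invoke the expansion of Lemma~\ref{lem:technical} to reduce the question to the convergence or divergence of $\sum (m+1)^{-a_i}$. The paper states this in one line (``write $(m+1)^{-n}\prod \rho(a_i) = \exp\{\sum \log(\rho(a_i)/(m+1))\}$ and note $1-\rho(a_i)/(m+1)$ is equivalent to $(m+1)^{-a_i}$''), while you spell out the two cases via $1-u \leq e^{-u}$ and via convergence of $\sum \log(1-u_i)$; the only cosmetic nit is that your uniform bounds $c_1(m+1)^{-a_i}\leq u_i\leq c_2(m+1)^{-a_i}$ should be indexed to ``$a_i$ large'' rather than ``$i$ large'' (harmless here since the section's standing assumption is $a_n\to\infty$, and in case $(ii)$ the summability hypothesis forces it anyway).
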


\begin{proof}
Let us write
\[
(m+1)^{-n} \prod_{i=1}^n \varrho(a_i) = \exp\left\{ \sum_{i=1}^n \log \frac{\varrho(a_i)}{m+1} \right\}.
\]
Using the expansion from Lemma~\ref{lem:technical}, recall that $(m+1)^{-1} \varrho(a_i) = 1 - m \cdot (m+1)^{-a_i-2} + O(a_i (m+1)^{-2 a_i})$ and its logarithm behaves like $(m+1)^{-a_i}$.
\end{proof}

\begin{lemma}
\label{lem:mean_asymptotic} 
$(i)$ If $\sum (m+1)^{-a_i} < +\infty$, then there exists $\beta > 0$ \st
\[
\beta (m + 1)^n \leq m_n \leq (m+1)^n.
\]
$(ii)$ If $\limsup_{i\rightarrow \infty} a_i / \log i < 1 / \log(m + 1)$, then
\[
m_n = o((m+1)^n).
\]
\end{lemma}

\begin{remark}
The first condition is valid as soon as $\liminf a_i / \log i > 1 / \log(m + 1)$.
\end{remark}

\begin{proof}[Proof of $(i)$ - Upper bound]
Branching process $(Z_n : n\in\N)$ is stochastically bounded by a Galton-Watson process with reproduction law $R+1$. Thus, for every integer $n$, $m_n \leq (1 + m)^n$.
\end{proof}

\begin{proof}[Proof of $(i)$ - Lower bound.]
Since $m_{n+1}(0,0) = m m_n$, we just need to consider $m_n(0, 0)$.
We prove by induction that $\prod_{j=0}^{n-1} \varrho(a_j) \leq m_n(0, 0)$ and conclude using   Lemma~\ref{lem:prod_leslie}. Recall that $m_0(0) = 1 = \varrho(0)$. Then, for $k \in \entiers{n - a_n}{n}$, by induction
\begin{align*}
m_k(0,0) &\geq \prod_{j=0}^{k-1} \varrho(a_j) \\
&\geq \prod_{j=0}^{n-1} \varrho(a_j) \left[\prod_{j=k}^{n-1} \varrho(a_j)\right]^{-1}
\geq \prod_{j=0}^{n-1} \varrho(a_j) \cdot \varrho(a_n)^{k-n},
\end{align*}
since sequence $(a_j)$ and function $\varrho$ are non decreasing.  We obtain
\[
m_{n+1}(0,0)=m \sum_{k=n-a_n}^n m_k(0) \geq \prod_{j=0}^{n-1} \varrho(a_j) \left[ m \sum_{k=n-a_n}^n \varrho(a_n)^{k-n} \right]
= \prod_{j=0}^n \varrho(a_j).
\]
which ends up the induction.
\end{proof}

\begin{proof}[Proof of $(ii)$]
Let us show, by induction, that
\[
\forall n \in \N,\, m_n(0, 0) \leq \varrho(a_n)^n.
\]
First, $m_0(0) = \varrho(0) = 1$.
Then, since $\varrho$ and $(a_n)_n$ are increasing, for every $k \in \entiers{n-a_n}{n}$,
\begin{align*}
m_k(0)  \ \leq \ \varrho(a_k)^k  \ \leq \ \varrho(a_n)^k.
\end{align*}
Thus,
$
m \sum_{k = n-a_n}^n m_k(0) \leq m \sum_{k=n-a_n}^n \varrho(a_n)^k $
and
$$m_{n+1}(0) \leq \varrho(a_n)^{n+1} \leq \varrho(a_{n+1})^{n+1}.$$
Finally, from Lemma~\ref{lem:technical}, $(m + 1)^{-n} \varrho(a_n)^n = \exp\{n \log(1 - m (m+1)^{-a_n-2} + o(a_n(m+1)^{-2a_n})\}$, which gives the desired result.
\end{proof}

\section{Dichotomy for branching processes}\label{sec:nonextinction}

Let $(Z_n : n \in \N)$ be an aging branching process (without immigration) with maximal age $(a_n)$ as defined in Section~\ref{sec:aging} and introduce the stopping times
$$T_K=\inf\{n \in \N : Z_n(0) \geq K\}$$
where $\inf \varnothing =+\infty$ by convention and
$$S = \bigcap_{K \geq 1} \{T_K <\infty\}=\left\{ \limsup_{n\rightarrow \infty} Z_n(0) =+\infty\right\}.$$
We recall that Nonext$=\{ \forall n \in \N : Z_n\ne 0\}$ and we know from Section \ref{sec:sans_immigration} that
$Z_n/m_n$ converges to a finite random variable $W$ which is positive with positive probability.

\begin{lemma}\label{lem:nonextinction}
(i) $S = \emph{Nonext}$. \\
(ii) A.s. on the event \emph{Nonext}, we have
$$\lim_{n \rightarrow \infty} \frac{Z_n}{m_n}>0.$$
\end{lemma}
\begin{proof}
$(i)$ is a classical result of dichotomy for branching processes. 
Either $\P(R=0)=0$ and $0$ is not accessible for $Z_n(.)$ and $\P(\text{Nonext})=1$. Then we use  the same coupling as in Section \ref{sec:sans_immigration}, i.e. a multitype branching process with fixed maximal age $a$, such that $\varrho(a)>1$. 
Then, the branching process $Z_n$ is larger than a supercritical multitype branching process with finite variances and primitive mean matrix and no extinction. It ensures that each of its coordinates goes to 
 $\infty$  a.s. \\
Or  $\P(R=0)=0$ and $0$ is  accessible for $Z_n(.)$. Then
  $$\limsup_{n\rightarrow \infty} Z_n=\infty \quad \text{a.s. on Nonext}.$$
 which ensures that  $\limsup_{n\rightarrow \infty} Z_n(0)=\infty \quad \text{a.s. on Nonext}.$
Finally, the converse inclusion is obvious.\\
$(ii)$ By branching property and monotonicity of the sequence $(a_n)$,
$$Z_{T_K+n}\geq \sum_{i=1}^{K} Z_n^{(i)},$$
a.s. on $\{T_K <\infty\}$, where $(Z^{(i)} : i \geq 1)$ are i.i.d. aging branching processeses whith maximal age is given by $(a_n)$. Adding that
$$\P\left(\lim_{n\rightarrow \infty} \frac{Z_n}{m_n} >0\right)=\P(W>0)>0$$
we get
$$\P\left( \liminf_{n\rightarrow \infty} \frac{Z_{T_K+n}}{m_n} >0 \ \vert \ T_K <\infty\right) \geq 1-(1-\P(W>0))^K$$
Finally $T_K<\infty$ and $m_{n}/m_{n+T_K}$ has a positive lower bound as $n\rightarrow \infty$ thanks to Lemma 2.2 $(ii)$, so
$$\P\left( \liminf_{n\rightarrow \infty} \frac{Z_{n}}{m_n} >0 \ \vert \ T_K <\infty \right)=\P\left( \liminf_{n\rightarrow \infty} \frac{Z_{T_K+n}}{m_n} >0 \ \vert \ T_K <\infty\right)$$
and we get 
$$\P\left( \liminf_{n\rightarrow \infty} \frac{Z_{n}}{m_n} >0 \ \vert \ T_K <\infty\right)\geq 1-(1-\P(W>0))^K.$$
Adding that the right hand goes to $0$ as $K$ goes to infinity ensures that
$$\P\left(\liminf_{n\rightarrow \infty} \frac{Z_{n}}{m_n} =0; \ S\right)=0$$ and ends up the proof.
\end{proof}

We have the following counterpart for aging branching process with immigration $(\bZ_n)_{n\in \N}$. The proof is similar and omitted.

\begin{lemma} 
We have $\P(\emph{Nonext})=1$ and 
$$\lim_{n \rightarrow \infty} \frac{\bZ_n}{m_n}>0 \qquad \text{a.s}.$$
\end{lemma}

\section{M/G/1 queues}\label{sec:queues}

To be self-contained, we recall below some well known facts around queues.
We want to determine the time $R$ a bus wait at a station where one customer was waiting when it arrived. We recall that, more generally, $R(t)$ is the time a bus wait at a station where customers gathered from $t$ units of times when the bus arrived. We use the classical connection with Galton-Watson processes.  For each customer getting on, his children are the customers arriving during his boarding. Since the customers arrivals are led by a Poisson Point process, the number of children is distributed as i.i.d. r.v. with Poisson distribution with parameter $\ag$. Intuitively, $R < +\infty$ a.s. if and only if customers arrive at the station slower than they board.
\begin{proposition}\label{prop:esperance_attente}
$R < \infty$ a.s. iff $\ag \leq 1$. Then,
\[
\E[R(t)]=\frac{\ag}{1-\ag} t.
\]
Furthermore,
\[
E\left[R(t)^2\right] = \frac{\ag}{(1-\ag)^3} t + \frac{\ag^2}{(1-\ag)^2} t^2.
\]
\end{proposition}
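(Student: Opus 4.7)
The plan is to use the Galton--Watson representation of $R(t)$ recalled just before the statement: $R(t)=\sum_{n\geq 0}Z_n$ is the total progeny of the branching process $(Z_n)$ with $\mathrm{Poisson}(\ag)$ reproduction law, started from $Z_0\sim\mathrm{Poisson}(\ag t)$ (the customers who arrived during the last $t$ units of time before the bus). The whole proposition then reduces to classical computations on total progenies of GW processes.

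For the finiteness criterion, the offspring mean is $\ag$ and the reproduction law is non-degenerate, so the standard GW dichotomy applies: almost-sure extinction of a single ancestral tree holds iff $\ag\leq 1$. Since $R(t)<\infty$ is equivalent to the joint extinction of the $Z_0$ independent trees rooted at the initial particles (and, when $Z_0=1$, to the extinction of the single tree, i.e.\ to $R<\infty$), we obtain $R(t)<\infty$ a.s.\ iff $\ag\leq 1$.

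For the mean, let $S$ denote the total progeny issuing from a single ancestor. From $\E[Z_n\mid Z_0=1]=\ag^n$ and a geometric sum, $\E[S]=1/(1-\ag)$ whenever $\ag<1$. Writing $R(t)=\sum_{k=1}^{Z_0}S_k$ with $S_k$ i.i.d.\ copies of $S$ independent of $Z_0$, Wald's identity yields $\E[R(t)]=\E[Z_0]\,\E[S]=\ag t/(1-\ag)$.

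For the second moment, the key task is to compute $\E[S^2]$ from the fixed-point decomposition $S=1+\sum_{i=1}^{X}S^{(i)}$ with $X\sim\mathrm{Poisson}(\ag)$ and $S^{(i)}$ i.i.d.\ copies of $S$ independent of $X$. Expanding the square, conditioning on $X$, and using $\E[X]=\ag$ together with $\E[X^2]=\ag+\ag^2$ gives a linear equation for $\E[S^2]$ which, after substituting $\E[S]=1/(1-\ag)$, simplifies to $\E[S^2]=1/(1-\ag)^3$ and hence $\mathrm{Var}(S)=\ag/(1-\ag)^3$. Conditioning $R(t)$ on $Z_0$ then yields the compound-Poisson variance formula $\mathrm{Var}(R(t))=\E[Z_0]\,\mathrm{Var}(S)+\mathrm{Var}(Z_0)\,\E[S]^2$; using the Poisson identity $\E[Z_0]=\mathrm{Var}(Z_0)=\ag t$ this collapses to $\mathrm{Var}(R(t))=\ag t/(1-\ag)^3$, and adding $\E[R(t)]^2=\ag^2t^2/(1-\ag)^2$ produces the claimed formula. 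The only real obstacle is the algebraic bookkeeping for $\E[S^2]$; everything else is routine.
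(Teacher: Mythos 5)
Your proof is correct and follows essentially the same route as the paper: both use the Galton--Watson total-progeny representation $R(t)=\sum_{k=1}^{N(t)}S_k$ with $N(t)\sim\mathrm{Poisson}(\ag t)$, compute $\E[S]=1/(1-\ag)$ and $\E[S^2]=1/(1-\ag)^3$ via the same fixed-point identity $S\stackrel{d}{=}1+\sum_{i=1}^{X}S^{(i)}$ (which is the paper's ``$R$ and $1+R(1)$ are equal in distribution''), and then combine moments over the Poisson initial population. Your rephrasing in terms of $\mathrm{Var}(R(t))$ via the compound-Poisson variance formula is algebraically equivalent to the paper's direct use of $\E[R(t)^2]=\E[N(t)]\E[R^2]+\E[N(t)(N(t)-1)]\E[R]^2$.
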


\begin{proof}
To enlight notations, let us still denote $(Z_n~:~n\in\N)$ the standard Galton-Watson process described just above. Then,
$
R = \sum_{i=0}^\infty Z_i.$
Thus, $R < +\infty$ a.s. if and only if $\ag \leq 1$. Furthermore,
\[
\E[R] = \sum_{i=0}^\infty \E(Z_i) = \sum_{i=0}^\infty \ag^i = \frac{1}{1-\ag}.
\]

The number of customers gathering during a time interval of length $t$ when the bus arrives is distributed as $N(t)$, a r.v. with Poisson distribution with parameter $\ag t$. Then, 
\[
\E[R(t)] = \E[N(t)] \E[R] = \frac{\ag t}{1-\ag}.
\]

In the same way, using independence and denoting $(R^{(i)}, i \in \N)$ a sequence of i.i.d. r.v. with common law $R$, we can compute the second moment:
\[
\E[R(t)^2] = \E \left[ \left(\sum_{i=1}^{N(t)} R^{(i)} \right)^2 \right]
 = \E[N(t)] \E[R^2] + \E[N(t)(N(t)-1)] \E[R]^2.
\]
Finally, using that $R$ and $1 + R(1)$ are equal in distribution,
$
\E[R^2] = 1/(1-\ag)^3,$
and the proof is complete.
\end{proof}

$\newline$
\textbf{Acknowledgement.} This work was partially funded by Chair
Modelisation Mathematique et Biodiversite VEOLIA-Ecole
Polytechnique-MNHN-F.X. and the professorial chair Jean Marjoulet.

\bibliography{bus_biblio}

\end{document}